 \theoremstyle{plain} \newtheorem{theorem}{Theorem}[section]
\newtheorem{proposition}[theorem]{Proposition}
\newtheorem{corollary}[theorem]{Corollary}
\newtheorem{lemma}[theorem]{Lemma}
\newtheorem*{question}{Question}
 \theoremstyle{definition}
\newtheorem{definition}[theorem]{Definition} \theoremstyle{remark}
\newtheorem{remark}[theorem]{Remark}
\newtheorem{Remarks}[theorem]{Remarks}
\newtheorem*{claim}{Claim}
\newtheorem{example}[theorem]{Example}
\newcommand{\cc}{{\mathbb C}} 
\newcommand{\rr}{{\mathbb R}} 
\newcommand{\zz}{{\mathbb Z}}
\newcommand{\minus}{{-1}}
\newcommand{\ol}{\overline} \newcommand{\wt}{\widetilde}
\newcommand{\ga}{\alpha} 
\newcommand{\gd}{\delta} 
\newcommand{\gf}{\varphi}
\newcommand{\gi}{\iota} 
\newcommand{\gs}{\sigma} \newcommand{\gt}{\tau}
\newcommand{\gl}{\lambda}
\newcommand{\gG}{\Gamma} 
\newcommand{\gGen}{\Gamma_{n+1}}
\newcommand{\gGek}{\Gamma_{k}}
\newcommand{\gGekko}{\Gamma_{k+1}}
 \newcommand{\gL}{\Lambda}
\newcommand{\gS}{\Sigma}
\newcommand{\gT}{\Theta}
\newcommand{\gTh}{\Theta^H_3}
\newcommand{\bo}{\partial}
\newcommand{\bohat}{\hat{\partial}}
\newcommand{\none}{_{n+1}}
\newcommand{\kone}{_{k+1}}
\newcommand{\id}{\operatorname{id}}
\newcommand{\diff}{\operatorname{Diff}}
\newcommand{\Hom}{\operatorname{Hom}}
\newcommand{\Coker}{\operatorname{Coker}}
\newcommand{\Ker}{\operatorname{Ker}}
\newcommand{\GHS}{\operatorname{GHS}}
\newcommand{\HM}{\operatorname{HM}}
\newcommand{\cone}{\operatorname{Cone}}
\newcommand{\prin}{{\mathrm{prin}}}
\newcommand{\ue}{\underline{E}}
\newcommand\mapright[1]{\,\smash{\mathop{\longrightarrow\,}\limits^{#1}}}
\newenvironment{enumerate1}{
\begin{enumerate}[\upshape (1)]}%
    {
\end{enumerate}
}
\newenvironment{enumerate1*}{
\begin{enumerate}[\upshape (*1)]}%
	{
\end{enumerate}
}
	{
\end{enumerate}
}
\newcommand{\comment}[1]{}
  \definecolor{colore}{cmyk}{0,1,0.6,0}
  \definecolor{coloregen}{cmyk}{0.7,0,1,0}
  \definecolor{coloresimo}{cmyk}{1,0.6,0,0}
  \definecolor{colore}{cmyk}{0,0,0,1}
  \definecolor{coloregen}{cmyk}{0,0,0,1}
  \definecolor{coloresimo}{cmyk}{0,0,0,1}
\newenvironment{enumeratei}{\begin{enumerate}[\upshape (i)]}%
		{\end{enumerate}}
\newenvironment{enumerateI}{\begin{enumerate}[\upshape (I)]}{\end{enumerate}}
\numberwithin{equation}{section} \setcounter{section}{0}
\begin{document}

\title{When are two Coxeter orbifolds diffeomorphic?}

\author{Michael W. Davis \thanks{I thank the Mathematical Sciences Center at Tsinghua University for the opportunity to visit there during the spring of 2013.  I began writing this paper during that period.  I also was partially supported by
NSF grant DMS 1007068.}    
}
\date{\today} \maketitle

\begin{abstract} One can define what it means for a compact manifold with corners to be a ``contractible manifold with contractible faces.''
Two combinatorially equivalent, contractible manifolds with contractible faces are diffeomorphic if and only if their $4$-dimensional faces are diffeomorphic.  It follows that two simple convex polytopes are combinatorially equivalent if and only if they are diffeomorphic as manifolds with corners.  On the other hand, by a result of Akbulut,  for each $n\ge 4$, there are smooth, contractible $n$-manifolds with contractible faces which are combinatorially equivalent but not diffeomorphic.  Applications are given to rigidity questions for reflection groups and smooth torus actions.
\smallskip

\noindent
\textbf{AMS classification numbers}.  Primary: 57R18, 57R55, 20F55, 20F65\\
Secondary: 57S15, 57R91
\smallskip

\noindent
\textbf{Keywords}: orbifold, manifold with corners, homology sphere, Coxeter orbifold, quasitoric manifold, polyhedral homology manifold 
\end{abstract}

\section{Introduction}
More than once  
during the past few years\footnote{Taras Panov and Mikiya Masuda asked me this at a 2010 conference on toric topology in Banff.  More recently, Suhyoung Choi asked me the same question in connection with projective representations of Coxeter groups, cf.~\cite[Question~3.3]{choi}.} 
I have been asked,  ``Are combinatorially equivalent Coxeter orbifolds diffeomorphic?''   By ``Coxeter orbifold'' the questioner means something like the fundamental polytope of a geometric reflection group.  The underlying space of such an orbifold has the structure of a manifold with corners.  Usually, the questioner also wants to require that the underlying space is diffeomorphic, as a manfold with corners, to a simple convex polytope.  In this context the answer to the question is affirmative although the proof is not obvious (see Wiemeler \cite[Corollary~5.3]{wiemeler} and Corollary~\ref{cor:diffIII}, below).  However, if one weakens the definition by only requiring the strata to be compact contractible manifolds, there are counterexamples (cf.~Theorem~\ref{t:notdiff}).  The problem is caused by the $4$-dimensional strata.  

An orbifold $Q$ is  \emph{reflection type} if its local models are finite linear reflection groups on $\rr^n$. 
(These were called ``reflectofolds'' in \cite{d10}.)  Since the orbit space of a finite linear reflection group is the product of a Euclidean space with a simplicial cone,  a smooth orbifold of reflection type naturally has the structure of a smooth manifold with corners.  One can label each codimension two stratum by an integer $m\ge 2$ to indicate that the local dihedral group along the stratum has order $2m$.    So, $Q$ is a smooth manifold with corners and the labeling is  such that it determines a \emph{finite} Coxeter group of rank $k$ on each codimension $k$ stratum.  Conversely, if $Q$ is a smooth manifold with corners with such a labeling on its codimension two strata, then it can be given the structure of a smooth orbifold of reflection type and this structure is unique up to isotopy (cf.~\cite[Section 17]{d83}) .  There are several possibilities for when $Q$ should be called a ``Coxeter orbifold.''  Here are three successively stronger definitions:
\begin{enumerateI}
\item
Each stratum of $Q$ is a compact contractible manifold.
\item
Each stratum of $Q$ is homeomorphic to a disk.  
\item
$Q$ is isomorphic (as a manifold with corners) to a simple convex polytope.
\end{enumerateI}
Any $Q$ satisfying (I) is a \emph{Coxeter orbifold }.  It is  \emph{type}  (II) or (III) if it satisfies the corresponding condition.
When  $Q$ is smooth, note that  if a stratum is homeomorphic to $D^k$, then, for $k\neq 4$, it is diffeomorphic to $D^k$, but, for $k=4$, this implication is equivalent to the smooth $4$-dimensional Poincar\'e Conjecture.  

Under any of the above definitions, there is a  simplicial complex dual to $Q$, called its \emph{nerve} and denoted $N(Q)$.  
The vertices of $N(Q)$ are the codimension one strata of $Q$.  A set of vertices spans a simplex of $N(Q)$ if and only if the intersection of the corresponding set of codimension one faces is nonempty.  For $Q$ of type (III), $N(Q)$ is the boundary complex of the simplicial polytope that is dual to $Q$.  For type (II), $N(Q)$ is a triangulation of $S^{n-1}$ such that the link of each $k$-simplex of $N(Q)$ is homeomorphic to $S^{n-k-2}$.  For a general $Q$ of type (I), $N(Q)$ is a \emph{generalized homology $(n-1)$-sphere}, (or a $\GHS^{n-1}$ for short), where this means  a polyhedral homology $(n-1)$-manifold which has the same homology as $S^{n-1}$ (cf.~\cite[p.\,192]{dbook}).  The labels on the codimension two strata of $Q$ become labels on the edges of $N(Q)$.  A \emph{combinatorial equivalence} between Coxeter orbifolds $Q$ and $Q'$ is a label-preserving simplicial isomorphism $N(Q)\to N(Q')$.

The $1$-skeleton of a simplicial complex $N$ together with a labeling of its edges by integers $\ge 2$ determines a Coxeter system $(W,S)$ (cf.~\cite[Ex.~7.1.6]{dbook}).   The set of generators $S$ can be identified with the vertex set of $N$.  The labeling is \emph{proper} if for each simplex $\gs$ of $N$ the special subgroup $W_{S(\gs)}$, generated by the vertices of $\gs$, is finite.
First, there is the question of existence.  If $N$ is a $\GHS^{n-1}$ with a proper labeling of its edges, is there a Coxeter orbifold $Q$ with $N(Q)= N$?  This was addressed in \cite{d83} where the following result was proved.  

\begin{theorem}\label{t:annals}\textup{(\cite[Theorems  12.2 and 17.1]{d83}).}  Suppose $N$ is a $GHS^{n-1}$ with a proper labeling of its edges.  Then there is topological Coxeter orbifold $Q$ with $N(Q)=N$.  Noting that each $3$-dimensional link in $N$ is a PL homology $3$-sphere (and therefore, has a unique smooth structure), we have that the orbifold $Q$ admits a smooth structure 
if and only if each  $3$-dimensional link bounds a smooth contractible  $4$-manifold. 
\end{theorem}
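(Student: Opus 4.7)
I would apply the reflection-group construction of \cite{d83}. The proper labeling of the $1$-skeleton of $N$ determines a Coxeter system $(W,S)$ with $S$ equal to the vertex set of $N$. For the underlying space $Q$ I would take a suitable topological model, starting from $K_0 = |\mathcal{P}|$, the geometric realization of the poset $\mathcal{P}$ of simplices of $N$ with an empty minimum adjoined (equivalently, the cone on the barycentric subdivision $N'$). For each vertex $v$ of $N$ set $K_v = |\mathcal{P}_{\ge\{v\}}|\subset K_0$; the family $\{K_v\}_{v\in S}$ is the mirror structure. For a simplex $\sigma\in N$, the intersection $K_\sigma = \bigcap_{v\in\sigma}K_v = |\mathcal{P}_{\ge\sigma}|$ is the realization of a poset with least element $\sigma$, hence a cone on $|\mathrm{lk}(\sigma,N)'|$ with apex $\sigma$; in particular $K_\sigma$ is contractible. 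The orbifold universal cover will then be $\cu(W,Q) = (W\times Q)/{\sim}$, where $(w,x)\sim(w',x)$ iff $w^{-1}w'$ lies in the subgroup generated by $\{s\in S : x\in K_s\}$.

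To ensure that $Q$ is a topological manifold with corners, I would modify $K_0$ near its $4$-dimensional strata. The transverse link to the open stratum of $K_0$ corresponding to a simplex $\sigma$ is the polyhedral homology sphere $|\mathrm{lk}(\sigma,N)|$; by the iterated Cannon--Edwards double suspension theorem, the cone on such a link is a topological disk of the correct dimension whenever the link has dimension $\ne 3$. When the link has dimension $3$---equivalently, when $K_\sigma$ is $4$-dimensional---the cone may fail to be a manifold at the apex, and I would replace each such closed $4$-dimensional stratum by a compact contractible topological $4$-manifold with the same boundary, using Freedman's theorem that every polyhedral homology $3$-sphere bounds such a $4$-manifold. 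The resulting space $Q$ is a topological manifold with corners whose nerve is $N$ and whose strata are all compact and contractible; together with the reflection orbifold structure dictated by the labels, this is the desired topological Coxeter orbifold.

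For the smooth statement, the only essential obstruction lives on the $4$-dimensional strata. Each such stratum is a compact contractible $4$-manifold whose boundary is the corresponding $3$-dimensional link in $N$, a PL (hence uniquely smooth) homology $3$-sphere; tautologically, this $4$-manifold admits a smooth structure precisely when the link bounds a smooth contractible $4$-manifold. Smoothability of any other contractible stratum is automatic: in dimensions $\le 3$ by Moise, and in dimensions $m\ge 5$ because the Kirby--Siebenmann obstruction lives in $H^4(M,\partial M;\zz/2)\cong H_{m-4}(M;\zz/2)=0$ on a compact contractible $m$-manifold $M$ with $m>4$. The main obstacle---the point requiring real care---is to assemble these individual smoothings into a globally compatible smooth manifold-with-corners structure on $Q$; this is handled by induction on stratum dimension, using collar-by-collar smoothing theory to extend a smooth structure on the already-smoothed strata across the next layer, as detailed in \cite[\S17]{d83}.
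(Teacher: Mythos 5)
Your first paragraph (setting up the mirror structure on $K_0 = \cone(N')$ and the basic construction $\cu(W,Q) = (W\times Q)/\!\sim$) is the correct framework and matches \cite{d83}. The gap is in the second paragraph, where you assert that the only strata of $K_0$ needing modification are the $4$-dimensional ones, because ``by the iterated Cannon--Edwards double suspension theorem, the cone on such a link is a topological disk of the correct dimension whenever the link has dimension $\ne 3$.'' This is not what the double suspension theorem says, and the claim is false. The double suspension theorem concerns $\Sigma^2 H$, not $\cone(H)$ or $\Sigma H$: if $H$ is a non-simply-connected homology $m$-sphere, then $\cone(H)$ is \emph{not} a manifold at the cone point, since the link of that point is $H$. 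More importantly, $\text{lk}(\sigma,N)$ for a simplex $\sigma$ of codimension $\ge 5$ is a $\GHS$ of dimension $\ge 4$, which is merely a polyhedral homology manifold and need not be a topological manifold at all (its own vertex links can be nontrivial PL homology $3$-spheres); in that case $\cone(\text{lk}(\sigma,N))$ fails to be a manifold not just at the apex but along a whole sub-cone. So the dual cells $K_\sigma$ can fail to be manifolds in every dimension $\ge 4$, not only in dimension $4$.

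Consequently the strategy ``take $\cone(N')$ and surgically replace only the closed $4$-strata'' does not produce a manifold with corners. What the paper (Theorem~\ref{t:resolution}, following Cohen and Sullivan's resolution theory as recalled in Remark~\ref{r:resolution}) actually does is build $P$ from scratch by \emph{induction on face dimension}: start with a point for each top simplex of $N$; at stage $k$, the boundary $\partial F_\sigma = \bigcup_{\tau>\sigma} F_\tau$ of the prospective $k$-face is a homology $(k-1)$-sphere assembled from the already-constructed lower faces (which are contractible manifolds, not cones on links), and one fills it in with a contractible $k$-manifold using Lemma~\ref{l:hsphere}. This replacement must be carried out in \emph{every} dimension $\ge 4$; once you replace a $4$-face, the boundary of a $5$-face is no longer $|\text{lk}(\sigma,N)'|$ but the union of the chosen contractible $4$-manifolds, and it in turn must be filled in, and so on. In the topological category this is unobstructed by Lemma~\ref{l:hsphere}(1); in the smooth category the only genuine obstruction is at the $4$-faces (which is where your last paragraph correctly focuses), while for $k\ge 5$ the filling obstruction lies in $H^k(\cone(N);\Theta_{k-1})=0$. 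Your Kirby--Siebenmann computation addresses smoothability of a single stratum but not the real issue of coherently extending a smoothing from $\partial F_\sigma$ into $F_\sigma$ stratum by stratum, which is exactly the obstruction cochain argument the paper uses.
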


The proof is based on the fact that every homology sphere of dimension $\neq 3$ has a smooth structure in which it smoothly bounds a contractible manifold (cf.~Lemma~\ref{l:hsphere} below).

This paper concerns the uniqueness question:  if two Coxeter orbifolds are combinatorially equivalent, are they isomorphic (as orbifolds)?  In the topological category the answer is yes.  This is a consequence of the facts that the Poincar\'e Conjecture and the topological h-cobordism Theorem hold in all dimensions.

In Section~\ref{s:unique} (Definition~\ref{d:4}) we define what it means for two combinatorially equivalent, smooth Coxeter orbifolds to have ``diffeomorphic $4$-dimensional faces.''  Our goal is  the following theorem.

\begin{theorem}\label{t:diffI}
Suppose $Q$ and $Q'$ are combinatorially equivalent, smooth Coxeter orbifolds with diffeomorphic $4$-dimensional faces. Then $Q$ and $Q'$ are diffeomorphic.
\end{theorem}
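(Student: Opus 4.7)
The proof plan is to build the diffeomorphism $Q \to Q'$ by induction on the dimension of the faces, working upward from the $0$-strata.  The combinatorial equivalence $N(Q) \to N(Q')$ pairs each face $F \subset Q$ with a corresponding face $F' \subset Q'$ of the same dimension, matching the face-incidence relations.  I will inductively construct compatible diffeomorphisms $F \to F'$ for each such pair and then assemble them over all faces into the desired global diffeomorphism.

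The low-dimensional inductive steps are essentially formal.  Faces of dimension $0$ are points and faces of dimension $1$ are closed intervals, automatically matched by the combinatorial data.  A $2$-face is a smooth disk (being a compact contractible $2$-manifold), and any orientation-preserving diffeomorphism of $S^1$ extends over $D^2$.  A $3$-face is diffeomorphic to $D^3$ by the smooth Poincar\'e conjecture in dimension three, and $\pi_0\mathrm{Diff}(D^3,\bo)=0$ (Cerf) gives the extension of any boundary diffeomorphism.  Dimension $4$ is precisely where the hypothesis enters: Definition~\ref{d:4} supplies the needed compatibility between the given diffeomorphisms of corresponding $4$-faces and the inductively produced data on their $3$-dimensional boundaries, possibly after an isotopy adjustment.

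The essential work occurs in dimensions $k \ge 5$.  Let $F$ and $F'$ be a matched pair of $k$-faces with a diffeomorphism $\phi:\bo F \to \bo F'$ already supplied by induction.  To extend $\phi$ over $F$, I plan to apply the smooth $h$-cobordism theorem to place $F$ and $F'$ in a common standard handle decomposition relative to their boundaries.  Contractibility of the faces ensures every handle admits a canceling partner: one-handles cancel against two-handles because $\pi_1(F)=1$, and higher-index pairs separate via the Whitney trick, which is valid in dimension $k \ge 5$.  The matched face structure on $\bo F$ then translates the cancellation pattern of $F$ into one for $F'$, giving an extension of $\phi$ to a diffeomorphism $F \to F'$.

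The main obstacle is this high-dimensional extension step.  In isolation, two compact contractible smooth $k$-manifolds with diffeomorphic boundaries need not be diffeomorphic rel boundary --- the obstruction lives in the group $\gT_k$ of homotopy $k$-spheres, and a generic diffeomorphism of $S^{k-1}$ may fail to extend over $D^k$ --- so the proof must exploit the stratified structure of $\bo F$.  What saves the argument is that $\phi$ is not a generic boundary diffeomorphism but is assembled from inductively matched diffeomorphisms of contractible lower-dimensional faces; this face-compatibility rigidifies $\phi$ enough that the would-be exotic twist vanishes and the handle decompositions of $F$ and $F'$ can be chosen compatibly.  Once this rigidity is made precise, the assembled local diffeomorphisms glue to the required global diffeomorphism $Q \to Q'$.
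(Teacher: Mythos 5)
Your overall inductive framework is the same as the paper's: build $\gf$ face by face, observe that dimensions $\le 3$ are standard cells, invoke the hypothesis in dimension $4$, and handle dimensions $\ge 5$ via the $h$-cobordism theorem. But the high-dimensional step --- which you correctly identify as the ``main obstacle'' --- is left as a handwave, and the handwave goes in the wrong direction. You write that the ``face-compatibility rigidifies $\phi$ enough that the would-be exotic twist vanishes.'' That is not what happens, and a proof along those lines would get stuck. For a particular choice of $\gf$ on the $(k-1)$-skeleton, there is no reason the invariant $\gamma(\gf\vert_{\bo F}) = [F\cup_\gf F'] \in \gGekko$ should be zero on each $k$-face $F$; the matched stratified structure of $\bo F$ does not by itself kill the twist.

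What the paper actually does is run a genuine obstruction theory, and that is the missing idea. The assignment $F \mapsto c(F) := [F\cup_\gf F'] \in \gGekko$ defines a cellular cochain $c \in C^k(P;\gGekko)$. Two facts make the argument work, and neither appears in your proposal. First, $c$ is a \emph{cocycle}; this is Lemma~\ref{l:holes}, which uses that the $k$-faces meeting a given $(k+1)$-face fit inside its boundary sphere, forming a ``tree of contractible manifolds'' so that the sum of the boundary obstructions cancels. Second, varying the choice of extension of $\gf$ over a $(k-1)$-face by an element of $\gGek$ changes $c$ exactly by a coboundary. Since $P$ is acyclic, $H^k(P;\gGekko)=0$, so one can go back and \emph{modify} $\gf$ on the $(k-1)$-skeleton to make $c$ identically zero, and only then extend over the $k$-skeleton using Lemma~\ref{l:cmfld}~(2). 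Your proposal never considers re-choosing $\gf$ on lower skeleta; it tries to argue the obstruction is already zero, and also the machinery you invoke (matching handle decompositions and cancellation patterns across $\bo F$) is not the right tool --- the correct statement is the mapping-cylinder/$h$-cobordism criterion of Lemma~\ref{l:cmfld} applied after the cohomological adjustment. Filling the gap requires exactly the cocycle lemma and the acyclicity of $P$.

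A minor point: in dimension $3$, extending a boundary diffeomorphism over a $3$-disk is governed by $\gG_3 = \pi_0\diff^+(S^2) = 0$, not by $\pi_0\diff_\bo(D^3)$; the citation to Cerf ($\gG_4=0$) is relevant for dimension $4$, not $3$.
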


\begin{corollary}\label{cor:diffIII} \textup{(Wiemeler \cite{wiemeler}).}
Combinatorially equivalent Coxeter orbifolds of type  (III) are diffeomorphic.
\end{corollary}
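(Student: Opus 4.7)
The plan is to deduce the corollary from Theorem~\ref{t:diffI} by verifying that corresponding $4$-dimensional faces of $Q$ and $Q'$, matched by the given combinatorial equivalence, are diffeomorphic as smooth manifolds with corners (in the sense of Definition~\ref{d:4}).

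Since $Q$ and $Q'$ are of type (III), each is isomorphic as a manifold with corners to a simple convex polytope. A face of a simple convex polytope is itself a simple convex polytope, so each $4$-dimensional face of $Q$ or $Q'$ inherits the structure of a simple convex $4$-polytope. The combinatorial equivalence $N(Q)\to N(Q')$ restricts, at each pair of matched $4$-faces, to a combinatorial equivalence of the associated simple convex $4$-polytopes.

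The task therefore reduces to the classical fact that any two combinatorially equivalent simple convex polytopes are diffeomorphic as manifolds with corners. I would prove this by induction on dimension: assuming the corresponding proper faces have already been matched by compatible diffeomorphisms, extend across the interior using a cone-like collar built from the convex linear structure, and then smooth the result in a corner-preserving manner. I expect the main technical point to be the bookkeeping of the smoothings in strata of codimension $\geq 2$ so that they patch consistently across the whole boundary; however, since the polytopes are convex, no exotic smooth-structure phenomena arise in any dimension, and the induction goes through without dimensional pathologies.

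With this classical fact in hand, the hypotheses of Theorem~\ref{t:diffI} are verified---the orbifolds are combinatorially equivalent by assumption and their $4$-dimensional faces are diffeomorphic by the paragraph above---so the corollary follows at once.
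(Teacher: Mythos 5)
Your overall strategy---verify the hypothesis of Definition~\ref{d:4} on the $4$-dimensional faces and then invoke Theorem~\ref{t:diffI}---is the correct one, and it is the route the paper intends. But the justification of the $4$-face step has a genuine gap, and the framing obscures where the real content lies.

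First, a structural problem: the statement you label a ``classical fact,'' that combinatorially equivalent simple convex polytopes are diffeomorphic as manifolds with corners, is not classical. It is exactly the unlabeled content of Corollary~\ref{cor:diffIII}, and the abstract presents it as a \emph{consequence} of Theorem~\ref{t:diffI}, not as a known input. Your sketch of an independent induction-on-dimension proof is, in effect, a re-run of the induction inside Theorem~\ref{t:main}; if that sketch were complete in all dimensions, the appeal to Theorem~\ref{t:diffI} would be redundant, which is a sign something is being assumed. You only actually need the $4$-dimensional case, and that is where the difficulty sits.

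Second, the sketch hides the key point. Convexity does give that each $4$-face $F$ and its counterpart $F'$ are, after rounding corners, diffeomorphic to the standard $D^4$. But Definition~\ref{d:4} requires the already-built strata-preserving diffeomorphism $\varphi\colon \partial F\to\partial F'$ (from the identification of the $3$-skeleta) to \emph{extend} across the interiors. Coning $\varphi$ from interior points and then ``smoothing in a corner-preserving manner'' is not a triviality about convexity: smoothing at the cone point reduces to the question of whether a given diffeomorphism $S^3\to S^3$ extends to $D^4\to D^4$. That is Cerf's theorem $\Gamma_4=0$, which the paper records in Section~\ref{s:basic}, and it is the exact ingredient your proposal omits. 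With it, the corollary closes: type (III) gives standard $D^4$'s for all $4$-faces, $\Gamma_4=0$ lets $\varphi$ extend, Definition~\ref{d:4} holds, and Theorem~\ref{t:diffI} finishes. This is also precisely where the argument \emph{fails} for general type (I) orbifolds (Remark~\ref{r:akb} and Akbulut's examples): there the $4$-faces need not be standard disks, so the extension is genuinely obstructed. Your write-up should make the dependence on $\Gamma_4=0$ explicit and drop the claim to a ``classical'' independent proof of the polytope statement.
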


On the other hand, according to the following theorem, without the hypothesis of diffeomorphic $4$-dimensional faces, Theorem~\ref{t:diffI} is false. As we will explain in Section~\ref{s:unique}, this is a consequence of a result of Akbulut \cite{ak}.
 
\begin{theorem}\label{t:notdiff}
In each dimension $n\ge 4$, there are smooth Coxeter orbifolds that are combinatorially equivalent but not diffeomorphic. Moreover, these orbifolds can be chosen to be aspherical.
\end{theorem}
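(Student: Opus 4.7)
My plan is to apply Theorem~\ref{t:annals} to a nerve that admits more than one smooth realization. The essential input from Akbulut~\cite{ak} is a homology $3$-sphere $\Sigma$ that bounds two smooth contractible $4$-manifolds $W_0$ and $W_1$ which are not diffeomorphic; in particular neither is diffeomorphic to $D^4$, since $\partial W_i=\Sigma$ is not $S^3$. As noted in Theorem~\ref{t:annals}, a PL homology $3$-sphere carries a unique compatible smooth structure, so both fillings are admissible in the sense required there.

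In dimension $n=4$ the construction is essentially direct. Fix a flag PL triangulation $\tau$ of $\Sigma$ (e.g.\ the barycentric subdivision of any PL triangulation) and label every edge by $2$. Then $\tau$ is a right-angled $\GHS^{3}$; since its only $3$-dimensional link is $\tau$ itself, which bounds both $W_0$ and $W_1$, Theorem~\ref{t:annals} produces smooth Coxeter orbifolds $Q_0,Q_1$ with $N(Q_0)=N(Q_1)=\tau$ and underlying smooth $4$-manifolds $W_0,W_1$ respectively. Any diffeomorphism of manifolds with corners $Q_0\to Q_1$ would restrict to a diffeomorphism $W_0\to W_1$, contradicting Akbulut.

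For $n\geq 5$, I would promote the $n=4$ picture to higher dimensions via a join. Let $L$ be a flag triangulation of $S^{n-5}$---for concreteness, the boundary of the $(n-4)$-dimensional cross-polytope---and set $N := L * \tau$, with every edge of $N$ labelled by $2$. Joins of flag generalized homology spheres are flag generalized homology spheres of the expected dimension, so $N$ is a properly labelled right-angled $\GHS^{n-1}$. Using the link formula for joins, one sees that every $3$-dimensional link in $N$ is either a copy of $\tau$ (precisely when the simplex is top-dimensional in $L$) or the standard $S^3$ (otherwise). By Theorem~\ref{t:annals} I may fill each standard $S^3$ by $D^4$, and every copy of $\tau$ uniformly by either $W_0$ or $W_1$, obtaining two smooth Coxeter orbifolds $Q_0,Q_1$ with $N(Q_0)=N(Q_1)=N$; in $Q_0$ every $4$-dimensional stratum sitting above a top-dimensional simplex of $L$ is diffeomorphic to $W_0$, and in $Q_1$ it is diffeomorphic to $W_1$.

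To see that $Q_0\not\cong Q_1$, note that any diffeomorphism of smooth manifolds with corners respects the stratification and so sends a $4$-dimensional $W_0$-stratum of $Q_0$ to some $4$-dimensional stratum of $Q_1$. But each $4$-dimensional stratum of $Q_1$ is diffeomorphic either to $D^4$ or to $W_1$, and $W_0$ is diffeomorphic to neither---to $D^4$ because the boundaries differ, and to $W_1$ by Akbulut. Asphericity of $Q_0$ and $Q_1$ is automatic: because both orbifolds are of type~(I), every fixed set of a finite special subgroup is contractible, so Davis's contractibility criterion (cf.\ \cite{dbook}) implies that the universal cover is contractible and $Q_i$ is a $K(\pi,1)$-orbifold. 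The main technical obstacle is simply having the Akbulut ingredient in precisely the stated form; once that is granted, the rest of the argument is formal.
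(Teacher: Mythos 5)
Your construction follows the same route as the paper: use Akbulut's pair $C, C'$ bounding a common homology $3$-sphere $\Sigma$, triangulate $\Sigma$ (taking the result flag, with all edges labelled $2$, to get asphericity), apply Theorem~\ref{t:annals} to fill in either $C$ or $C'$, and pass to $n \geq 5$ via a join with a triangulated $S^{n-5}$. The one genuine gap is in your non-diffeomorphism step: you rely on $W_0$ and $W_1$ being \emph{abstractly} non-diffeomorphic, writing ``$W_0$ is diffeomorphic to neither\dots to $W_1$ by Akbulut,'' but Theorem~\ref{t:akb} asserts only that $C$ and $C'$ are not diffeomorphic \emph{rel boundary}. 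In cork-type constructions such as Akbulut's, the two fillings may well be abstractly diffeomorphic --- the exotica is concentrated entirely in the boundary identification --- so the mere existence of some diffeomorphism $W_0 \to W_1$ contradicts nothing in the cited theorem.

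The repair, which the paper uses implicitly, runs through the rel-boundary statement. A diffeomorphism $\Phi: Q_0 \to Q_1$ of manifolds with corners is strata-preserving, hence restricts to a strata-preserving diffeomorphism of the $3$-skeletons; since all faces of dimension $\leq 3$ are standard cells (by the $3$-dimensional Poincar\'e Conjecture and the triviality of $\pi_0\,\diff_\partial(D^k)$ for $k\leq 3$), this restriction is isotopic through strata-preserving diffeomorphisms to the canonical identification of the common $3$-skeleton. Modifying $\Phi$ by a collar isotopy, one may assume it is the identity on the boundary of each $4$-face; as $\Phi$ cannot carry a $W_0$-stratum to a $D^4$-stratum (their boundaries are $\Sigma$ and $S^3$ respectively), it produces a diffeomorphism $C \to C'$ rel boundary, which is exactly what Theorem~\ref{t:akb} excludes. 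A secondary remark: the justification you offer for asphericity (``every fixed set of a finite special subgroup is contractible'') is not the right criterion; what is needed is $N(Q) = L(W,S)$, which for the right-angled labelling is precisely the requirement that $N(Q)$ be flag --- a condition your choice of triangulation does satisfy, so the conclusion stands even though the stated reason does not.
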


The questions we are dealing with in Theorems~\ref{t:annals}, \ref{t:diffI} and \ref{t:notdiff} have nothing to do with the labelings of the codimension two strata of the reflection type orbifold, rather we are only concerned with its underlying structure as a manifold with corners -- the Coxeter group is extraneous.  For this reason,  we will reformulate  our results in what follows in terms of ``resolutions'' of cones on generalized homology spheres by ``contractible manifolds with contractible faces.'' 

Sections~\ref{s:reflec} and \ref{s:torus} give applications of Theorems~\ref{t:diffI} and \ref{t:notdiff} to the theory of reflection groups and to the theory of locally standard torus actions. Mikiya Masuda suggested that I add to the original version of this paper some details in the section on torus actions concerning smoothness questions.  On the basis of his very helpful comments, I have added Propositions~\ref{p:moment}, \ref{p:locally} and \ref{p:toric} which address questions of equivariant diffeomorphism versus equivariant homeomorphism.

After completing the first version of this paper, I learned of the recent work \cite{wiemeler} by M. Wiemeler on torus manifolds, with which this paper has substantial overlap.  In particular, Wiemeler proves the result for simple polytopes (Corollary~\ref{cor:diffIII}), as well as its application in Section~\ref{s:torus} to quasitoric manifolds of type (III)  (Theorem~\ref{t:masuda}).  

\section{Existence}\label{prelim}
\paragraph{Manifolds with faces.}
A topological $n$-manifold with boundary is a \emph{smooth manifold with corners} if it is locally differentiably modeled on the simplicial cone,  $[0,\infty)^n\subset \rr^n$. If $(x_1,\dots , x_n)\!:\!U\to [0,\infty)^n$ are local coordinates in a neighborhood of a point $x$, then $c(x)$, the number $c(x)$ of indices $i$ with $x_i=0$, is independent of the chart.  A component of $\{x\mid c(x)=m\}$ is a \emph{stratum} of {codimension $m$}.   One also can define the notion of a \emph{topological manifold with corners} by requiring the overlap maps to be strata-preserving (cf.~\cite[p.~180]{dbook}).  A manifold with corners is \emph{nice} if each stratum of codimension two is contained in the closure of exactly two strata of codimension one.  Niceness implies that the closure of a codimension $k$-stratum is also a manifold with corners.    

A nice manifold with corners $P$ will be called a \emph{manifold with faces}.  A \emph{face} is a closed stratum.  The \emph{$k$-skeleton} of $P$ is the union of faces of dimension $\le k$.  Two manifolds with faces, $P$ and $P'$, are \emph{topologically isomorphic} if there is a strata-preserving homeomorphism $P\to P'$.  (Of course, a diffeomorphism of smooth manifolds with corners is automatically strata-preserving.) 
Parallel to our earlier definitions in the Introduction, consider the following conditions on a manifold with faces $P$:
\begin{enumerateI}
\item
Each stratum of $P$ is a compact contractible manifold.
\item
Each stratum of $P$ is homeomorphic to a disk.  
\item
$P$ is isomorphic to a simple convex polytope.
\end{enumerateI}
In case (I), $P$ is a \emph{contractible manifold with contractible faces}.  In case (II), it is a \emph{cell with cellular faces}.  If we replace the word ``contractible'' by ``acyclic,'' we get the notion of an \emph{acyclic manifold with acyclic faces}.  

One defines $N(P)$, the \emph{nerve of $P$}, as before: it is the simplical complex with vertex set the set of codimension one faces and with a $k-1$-simplex for each face of codimension $k$.   
Two manifolds with  faces, $P$ and $P'$, are  \emph{combinatorially equivalent} if there is a simplicial isomorphismm $N(P)\to N(P')$. If $P$ is an acyclic $n$-manifold with acyclic faces, then $N(P)$ is a $\GHS^{n-1}$. 

\begin{example}
A simple convex polytope is naturally a smooth cell with cellular faces.  More generally, if $N$ is a triangulation of $S^{n-1}$ and if the link of each $k$-simplex is homeomorphic to $S^{n-k-2}$ (e.g., if the triangulation is PL), then $\cone (N)$ has a  dual cell structure giving it the structure of a (topological) cell with cellular faces.
\end{example}

\paragraph{Resolutions.}
Suppose $N$ is a $\GHS^{n-1}$.  A \emph{resolution} of $\cone (N)$ is a contractible manifold with contractible faces $P$ such that $N(P)=N$.  To define \emph{acyclic resolution} replace the word ``contractible'' by ``acyclic.''  A resolution is \emph{smooth} if $P$ is a smooth manifold with corners. (The use of the term ``resolution'' will be discussed in Remark~\ref{r:resolution} below.)

The following existence result essentially was proved in \cite{d83}.  It implies Theorem~\ref{t:annals} of the Introduction.
\begin{theorem}\label{t:resolution}\textup{(cf.~\cite[Thm.~17.2]{d83}).} Suppose $N$ is a $\GHS^{n-1}$.  Then the following statements are true.
\begin{enumerate1}
\item
$\cone (N)$ has a topological resolution. 
\item
$\cone (N)$ has a smooth acyclic resolution. 
\item
$\cone (N)$ has a smooth resolution 
if and only if each of its $3$-dimensional links bounds a smooth (or PL) contractible $4$-manifold.
\end{enumerate1}
\end{theorem}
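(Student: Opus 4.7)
The plan is to induct on $n$, constructing the resolution $P$ by building its faces from highest to lowest codimension. For each simplex $\sigma$ of $N$ I want to produce a compact manifold $F_\sigma$ of dimension $n-\dim\sigma-1$ that itself resolves $\cone(\mathrm{Lk}(\sigma,N))$; the top piece $F_\emptyset$ will then be the desired resolution $P$ of $\cone(N)$.

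Begin with the top-dimensional simplices of $N$, for which the link is empty, setting $F_\sigma$ to be a point. For $\sigma$ of smaller dimension, the inductive hypothesis supplies $F_\tau$ for every $\tau\supsetneq\sigma$; these glue along common subfaces, in the pattern dictated by the incidences in $\mathrm{Lk}(\sigma,N)$, to form a closed space $\partial F_\sigma$ of dimension $n-\dim\sigma-2$. Applying the Nerve Lemma to the cover of this space by the contractible (respectively acyclic) pieces $F_\tau$ shows that $\partial F_\sigma$ has the homotopy type (respectively homology) of $|\mathrm{Lk}(\sigma,N)|$; in particular it is a homology sphere of the right dimension. A local calculation near each interior stratum, using collars and the inductive product structure of the $F_\tau$'s, shows that $\partial F_\sigma$ is actually a closed topological manifold, and, after smoothing of corners, a smooth closed manifold in the smooth setting.

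The central input is Lemma~\ref{l:hsphere}: every homology $m$-sphere bounds a compact contractible manifold, topologically in every dimension and smoothly whenever $m\neq 3$. Filling in $\partial F_\sigma$ by means of this lemma produces $F_\sigma$, and the three parts of the theorem follow by tracking where the filling step can fail. For (1), the topological filling is always available, so the topological resolution is unconditional. For (3), every filling works in dimensions $\neq 4$; the 4-dimensional faces arise precisely as fillings of the 3-dimensional links of $N$ (each of which, being a PL homology 3-sphere, carries a unique smooth structure), so the smooth construction succeeds if and only if each such link bounds a smooth contractible $4$-manifold. The converse of (3) is immediate, since in a smooth resolution the 4-dimensional face attached to any 3-link is a smooth contractible 4-manifold with that link as its boundary. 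For (2), one exploits the extra flexibility of acyclic pieces: an acyclic $F_\tau$ need not be simply connected, so the fundamental groups of the assembled boundaries can be adjusted, and together with the acyclic form of the key lemma this lets the construction go through in every dimension without the obstructions that block the contractible smooth version in dimension~$4$.

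The main obstacle is the gluing step: verifying that the assembled $\partial F_\sigma$ is a genuine topological (or smooth) manifold, not merely a homology manifold. This demands a careful neighborhood analysis at each interior stratum, using collars and the inductive local product structure of the $F_\tau$'s, and in the smooth setting it further requires smoothing of corners so that the smooth structures on adjacent faces glue coherently along their common subfaces. Once this is established, the induction closes and all three parts of the theorem follow from the corresponding cases of the key lemma.
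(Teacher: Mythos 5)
Your overall strategy---build the faces of $P$ from highest codimension downward, at each stage filling in a glued-up boundary homology sphere by a contractible (or acyclic) manifold, with Lemma~\ref{l:hsphere} as the key input---is the same as the paper's, and your handling of part~(1) and of the biconditional in part~(3) in dimension~4 is correct.

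However, there is a real gap in the way you dismiss the higher-dimensional steps. You claim that in the smooth case ``every filling works in dimensions $\neq 4$,'' but Lemma~\ref{l:hsphere}~(3) does \emph{not} say that a smooth homology $(k-1)$-sphere bounds a contractible manifold for $k-1\ge 4$; it only says that it bounds after connected sum with some homotopy sphere $\Sigma^{k-1}\in\gT_{k-1}$. Thus when you try to fill a $k$-face, you may be forced to change the smooth structure of that face's boundary, which is assembled from $(k-1)$-faces shared with other $k$-faces. This produces an obstruction cochain $c\in C^{k}(\cone(N);\gT_{k-1})$; one must check that $c$ is a cocycle and that its cohomology class is well defined, so that its vanishing (which holds because $\cone(N)$ is acyclic) lets one choose the lower faces consistently. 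The paper makes exactly this point for $k\ge 5$ in~(3), and the same obstruction theory---this time with coefficients in $\gTh$, varying the $3$-faces by connected sum with elements of $\gTh$---is what makes part~(2) go through. Your explanation of~(2) in terms of ``adjusting fundamental groups'' gestures at this but does not actually formulate or kill the obstruction, and without that step the inductive construction does not close. You should state the obstruction-theoretic argument explicitly: the indeterminate cocycle, the coboundary variation, and $H^{k}(\cone(N);\gT_{k-1})=0$ (resp.\ $H^4(\cone(N);\gTh)=0$).
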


We will recall the proof of this theorem at the end of this section since it is based on the same obstruction theory which we will need for the uniqueness result.

\paragraph{Homology spheres and homotopy spheres.}
Let $\gT_k$ be the Kervaire-Milnor group of oriented diffeomorphism classses of smooth structures on $S^k$ (cf.~\cite{km}).  Let $\gT^H_k$ be the group of homology cobordism classes of smooth homology $k$-spheres.  
The next lemma  is well-known.
\begin{lemma}\label{l:hsphere}
	\begin{enumerate1}
	\item
	Any homology $k$-sphere is the boundary of  a topological contractible $(k+1)$-manifold.
	\item
	For $k\neq 3$, any PL homology $k$-sphere is the boundary of a PL contractible $(k+1)$-manifold.
	\item
	Suppose $M^k$ is a smooth homology $k$-sphere.  For $k\geq 4$, there is a smooth homotopy $k$-sphere $\gS^k$, possibly with an 		exotic smooth structure, such that $M^k\#\gS^k$ bounds a smooth contractible $(k+1)$ manifold. (Here $\#$ is used for the connected 		sum operation.)
	\end{enumerate1}
\end{lemma}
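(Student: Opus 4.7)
All three statements are proved by the surgery-theoretic filling procedure originating with Kervaire, with the only differences arising from the ambient category (TOP, PL, or DIFF). My plan is to establish (2) first, deduce (1) with minor modifications, and then handle (3).

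For (2), the low-dimensional cases $k\le 2$ are immediate. For $k\ge 5$ I would run Kervaire's construction in the PL category: attach PL 2-handles to $M\times [0,1]$ to kill $\pi_1(M)$, using that $H_1(M)=0$ makes $\pi_1(M)$ perfect so that each normal generator is a product of commutators and therefore bounds an immersed PL orientable surface in $M$; in codimension $\ge 3$ general position embeds these surfaces and supplies the framings for the 2-handles. Then iteratively attach higher PL handles to kill $H_i$ of the growing trace for $i = 2, 3, \ldots$, using that in a simply-connected PL manifold of dimension $\ge 5$ each homology class is represented by an embedded sphere with trivial normal bundle. By Smale's PL Poincar\'e theorem the opposite boundary becomes $S^k$, which is capped by $D^{k+1}$. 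For $k=4$, I would instead start from a PL 5-manifold $V$ with $\partial V=M$ (which exists since $\sigma(M)=0$ puts $M$ in the trivial oriented PL cobordism class) and kill $\pi_1(V)$ together with all positive-degree homology of $V$ by interior PL surgery, leaving $\partial V$ untouched.

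Part (1) follows similarly. The case $k=3$ is Freedman's theorem that any integral homology 3-sphere bounds a topological contractible 4-manifold. For $k=4$, one runs the Kervaire procedure in the topological category using Freedman--Quinn surgery. For $k\ge 5$, Poincar\'e duality gives $H^4(M;\mathbb{Z}/2)=0$, so the Kirby--Siebenmann obstruction vanishes and $M$ admits a PL structure; (2) then provides a PL contractible filling, which is in particular a topological one.

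For (3), in dimensions $k\ge 5$ Kervaire's original theorem already produces a smooth contractible filling, so one may take $\Sigma=S^k$. The substantive case is $k=4$: running the construction in a smooth 5-dimensional filling leaves a potentially exotic smooth homotopy 4-sphere $\Sigma'$ at the opposite boundary of an intermediate cobordism $W$. To absorb $\Sigma'$, I would apply the tube trick: join $M$ and $\Sigma'$ by a properly embedded smooth arc $\gamma$ in $W$, delete a smooth tubular neighbourhood of $\gamma$ (a 5-ball meeting each boundary component in a smooth 4-disk), and observe that the resulting manifold $V$ has $\partial V = M\#\Sigma$ with $\Sigma := -\Sigma'$. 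A Mayer--Vietoris computation shows $V$ is acyclic, and since the preliminary 2-handle stage has already trivialised $\pi_1$, $V$ is also simply connected, hence contractible. The main obstacle throughout is dimension 4, where neither the PL nor the smooth Poincar\'e conjecture is available; in (2) this is handled by moving the surgery into a 5-dimensional filling, and in (3) it is precisely what necessitates the connected-sum summand $\Sigma$.
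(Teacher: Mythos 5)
The proof of (3) has the $k=4$ and $k\ge 5$ cases essentially reversed. For $k\ge 5$ the claim that ``Kervaire's original theorem already produces a smooth contractible filling, so one may take $\Sigma = S^k$'' is false: an exotic smooth $k$-sphere $\Sigma$ (e.g.\ a generator of $\Theta_7 \cong \mathbb{Z}/28\mathbb{Z}$) is a smooth homology sphere, and if it bounded a smooth compact contractible manifold $C$, then $C$ minus an open disc would be a smooth $h$-cobordism from $\Sigma$ to $S^k$, whence $\Sigma$ would be diffeomorphic to $S^k$ by the $h$-cobordism theorem, a contradiction. Kervaire's theorem gives a \emph{PL} (not smooth) contractible filling; the exotic connected-sum summand in (3) is needed precisely when $k\ge 5$, because there the natural map $\Theta_k \to \Theta^H_k$ is a bijection. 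The paper's argument in this range follows Hsiang--Hsiang: attach $1$- and $2$-handles to $M^k\times[0,1]$ to obtain a simply connected homology cobordism to a smooth homotopy $k$-sphere, and that homotopy sphere (reversed) is $\Sigma$.

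Conversely, $k=4$ is the dimension where no exotic summand is required: $\Theta^H_4 = 0$, so every smooth homology $4$-sphere bounds a smooth contractible $5$-manifold and one can take $\Sigma = S^4$. The paper obtains this by noting that $M^4$ is null in framed bordism, so it bounds a framed $W^5$, and then performing \emph{interior} surgery on $W^5$ to make it contractible without touching $\partial W$. Your surgery-on-the-boundary approach for $k=4$ cannot be run as stated: attaching $2$-handles to $M^4\times[0,1]$ to kill $\pi_1$ introduces new $H_2$ in the far boundary (each circle surgered is null-homologous, and the resulting $D^2\times S^2$ contributes a new $2$-class), and killing that $H_2$ would require $3$-handles attached along embedded $2$-spheres with trivial normal bundle in a $4$-manifold---exactly the step that fails in dimension $4$. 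The tube trick then has nothing to absorb, because the needed homotopy $4$-sphere on the far side is never produced.

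Your treatments of (1) and (2) are sound and take a somewhat different route from the paper's: you give a direct Kervaire-style PL handle argument for (2) and invoke the vanishing of the Kirby--Siebenmann obstruction for (1) in dimensions $k\ge 5$, whereas the paper derives (2) from (3) via smoothability of PL homology spheres. That is a legitimate variation. The error is concentrated in part (3).
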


Statement (2) is not true for every homology $3$-sphere  because of Rohklin's Theorem.

\begin{proof}[Sketch of proof of Lemma~\ref{l:hsphere}]
We note that special arguments are needed to prove (1) when $k=3$ or $4$.
We first prove (3). Suppose $k> 4$ and that $M^k$ is smooth.  As in \cite[Theorem 5.6]{hh} 
one can add $1$- and $2$-handles to $M^k\times [0,1]$ to get a simply connected, homology cobordism $W$ from $M^k$ to a smooth homotopy $k$-sphere.  If $\gS^k$ denotes this homotopy sphere with orientation reversed, then $M^k\#\gS^k$ bounds a contractible manifold.  When $k=4$,  one argues that $M^4$ bounds a framed manifold $W^5$ (since it represents $0$ in framed bordism) and then that one can do surgery to make $W^5$ contractible.  This proves (3). Since every PL homology sphere is smoothable, it also proves (2).

As for (1), when $k=4$, it is not known if $M^4$ admits a PL structure.  However, since we can do simply connected topological surgery, we can add $1$- and $2$-handles to $M^4\times [0,1]$ as before to get a simply connected homology cobordism $W$ from $M^4$ to a homotopy $4$-sphere, and by Freedman's proof of the Poincar\'e Conjecture in \cite{freedman} this is homeomorphic to the standard $S^4$.  Gluing on $D^5$ gives the desired contractible $5$-manifold.  When $k=3$, Freedman  \cite{freedman} proved that (1) holds  in the topological category.
\end{proof}

\begin{remark}\label{r:resolution} (\emph{On resolutions}). 
A \emph{polyhedral homology $m$-manifold} means a simplicial complex $N$ such that the link of any $k$-simplex has the same homology as $S^{m-k-1}$.  The ``dual cell''  of such a $k$-simplex in $N$ is the cone on its link, i.e., it is a contractible polyhedral homology $(m-k)$-manifold with boundary.  General  resolutions for polyhedral homology manifolds (rather than just for generalized homology spheres) were considered by Cohen \cite{cohen} and Sullivan \cite{sullivan} in the early 1970\,s.  They developed an obstruction theory for finding a resolution of $N$ by a manifold by replacing each dual cell by an acyclic manifold with boundary.  One proceeds by induction on the dimension of the dual cell.  By Lemma~\ref{l:hsphere}~(2), the only obstruction to finding a PL acyclic resolution of a polyhedral homology manifold $N$ lies in  $H^4(N;\gTh)$.  A version of this obstruction theory is used in the proof of Theorem~\ref{t:resolution} which we sketch  below.  In the topological category,  it follows from Lemma~\ref{l:hsphere}~(1) that contractible resolutions always exist.
\end{remark}

\begin{proof}[Sketch of proof of Theorem~\ref{t:resolution}]
We build the faces of $P$ by induction on dimension.  Start with a $0$-dimensional face for each $(n-1)$-simplex of $N$.  Assume by induction that we have constucted a face $F_\gt$ for each simplex $\gt$ of codimension $<k$ and let $\gs$ be a simplex of codimension $k$.  Define 
\(
\bo F_\gs :=\bigcup _{\gt>\gs} F_\gt
\)
where the union is over  all $\gt>\gs$ of codimension $(k-1)$ in $N$.  Then $\bo F_\gs$ is a homology $(k-1)$-sphere and the problem is to fill it in with a $k$-dimensional contractible face. By Lemma~\ref{l:hsphere}~(1) we can always do this in the topological category; hence, statement (1).

To get a smooth resolution one encounters a problem when trying to construct the $4$-dimensional faces. If each face of dimension $\le 3$ is a disk, then the boundary of a potential $4$-dimensional face is dual to the corresponding $3$-dimensional link.  So, the condition that this homology $3$-sphere smoothly bounds a contractible manifold is sufficient for constructing the $4$-dimensional face.  
\emph{A priori},  this condition might not be necessary since the homology $3$-sphere would be indeterminate if  $3$-dimensional faces could be  fake $3$-disks.  However, since the $3$-dimensional Poincar\'e Conjecture is true, fake $3$-disks do not exist. Therefore, a smooth  $4$-dimensional face exists if and only if the homology $3$-sphere bounds a contractible $4$-manifold. For $k\ge 4$, the obstruction to filling in the smooth $(k+1)$-dimensional strata lies in $H^k(\cone(N);\gT_{k-1})$.  This group is $0$ since $\cone (N)$ is acyclic.  This proves (3).

What about (2)?  As explained in \cite[pp.\,322--323]{d83}, there is an obstruction cocycle to filling in the boundaries of potential $4$-dimensional strata with smooth acyclic $4$-manifolds.  This cocycle takes values in $\gTh$.  However, this cocycle is indeterminate, since we can alter our construction of $3$-dimensional faces by taking connected sum with elements of $\gTh$.  In other words, the cocycle can varied by a coboundary giving us a well-defined obstruction in $H^4(\cone(N);\gTh) = 0$.  After filling in the $4$-dimensional faces, we can continue as in the proof of (3) to fill in the faces of higher dimension.
\end{proof}

\paragraph{Uniqueness in the topological category.}  The next lemma follows from the Poincar\'e Conjecture and the h-cobordism Theorem.
\begin{lemma}\label{l:topcat}
Suppose $C$ and $C'$ are compact contractible $k$-manifolds and that $\gf:\bo C\to \bo C'$ is a homeomorphism,  then $\gf$ extends to a homeomorphism $\Phi:C\to C'$.
\end{lemma}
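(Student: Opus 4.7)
\emph{Proof plan.} The plan is to glue $C$ and $C'$ along $\varphi$ into a closed manifold, recognise it as $S^k$ by the Poincar\'e conjecture, and then manufacture the extension $\Phi$ via the topological $h$-cobordism theorem.

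First I would form $W := C\cup_\varphi C'$, a closed topological $k$-manifold. Because $C$ and $C'$ are contractible, a Mayer--Vietoris computation for the cover $\{C,C'\}$ (with intersection $\partial C$) shows that $W$ has the integral homology of $S^k$, and van Kampen's theorem gives $\pi_1(W)=1$; hence $W$ is a homotopy $k$-sphere. By the topological Poincar\'e conjecture in every dimension (trivial for $k\le 2$, Perelman for $k=3$, Freedman for $k=4$, Smale for $k\ge 5$), $W\cong S^k$. In the low-dimensional cases $k\le 3$, each of $C$ and $C'$ is already a disk, and $\varphi:S^{k-1}\to S^{k-1}$ extends by the Alexander trick directly; so assume $k\ge 4$ from now on.

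Next I would produce the extension. Identify $W$ with $S^k$, so that $C$ and $C'$ become complementary codim-$0$ submanifolds meeting along the bicollared codim-$1$ submanifold $\Sigma:=\partial C=\partial C'$; by the construction of $W$, the map $\varphi$ is realised as the identity on $\Sigma\subset W$. The goal is to build a self-homeomorphism $\psi:W\to W$ that fixes $\Sigma$ pointwise and interchanges $C$ with $C'$; the restriction $\psi|_C$, read through the inclusions $C,C'\hookrightarrow W$, will then be the desired $\Phi$. To construct $\psi$, puncture the two sides by removing small open disks $D\subset\mathrm{int}(C)$ and $D'\subset\mathrm{int}(C')$, and set $X := W\setminus(\mathrm{int}(D)\cup\mathrm{int}(D'))$. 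A second Mayer--Vietoris computation, again using the contractibility of $C$ and $C'$, shows that $X$ is a simply connected manifold with boundary $\partial D\sqcup\partial D'\cong S^{k-1}\sqcup S^{k-1}$, has the homology of $S^{k-1}$, and has each boundary-sphere inclusion a homotopy equivalence -- that is, $X$ is an $h$-cobordism between two $(k-1)$-spheres. The topological $h$-cobordism theorem (Kirby--Siebenmann in dimension $\ge 6$, Freedman in dimensions $4$ and $5$) then yields a product trivialization $X\cong S^{k-1}\times[0,1]$. Combining the product flip $(x,t)\mapsto(x,1-t)$ on $X$, an Alexander-trick swap of the removed disks $D\leftrightarrow D'$, and a bicollar adjustment near $\Sigma$, one assembles $\psi$.

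The main obstacle is ensuring that the resulting $\psi$ actually interchanges $C$ and $C'$. A generic product trivialization of $X$ need not respect the decomposition $X=(C\setminus\mathrm{int}(D))\cup_\varphi(C'\setminus\mathrm{int}(D'))$ cut out by $\Sigma$, so the naive product flip may fail to swap the two halves. Overcoming this demands invoking the $h$-cobordism theorem relative to a bicollar of $\Sigma$ -- pinning $\Sigma$ throughout the trivialization -- which is the technical heart of the proof. It is precisely here that both hypotheses flagged in the lemma's preamble, the Poincar\'e conjecture (for the initial identification $W\cong S^k$) and the $h$-cobordism theorem (for the relative trivialization), are indispensable.
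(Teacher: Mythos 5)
Your opening move -- gluing to form $W = C\cup_\varphi C'$, recognising it as $S^k$ via the Poincar\'e conjecture, and handling $k\le 3$ by the Alexander trick -- matches the paper's intent (the paper states no proof, but its smooth analogues, Lemmas~\ref{l:homology} and \ref{l:cmfld}, reveal the argument). After that, however, you diverge, and the route you take has a genuine gap that the ``main obstacle'' paragraph flags but does not resolve.

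The problem is with $\Sigma := \partial C$ inside the punctured sphere $X$. You want a trivialization $X\cong S^{k-1}\times[0,1]$ that carries $\Sigma$ to a level set, so that the product flip fixes $\Sigma$ and swaps the two halves. But $\Sigma$ is only a homology $(k-1)$-sphere; it need not be simply connected and so need not be homeomorphic to $S^{k-1}$ at all. (The paper's own central example makes this vivid: for $k=4$, take $C$ and $C'$ to be Akbulut's contractible $4$-manifolds, whose common boundary is the non-simply-connected boundary of the Mazur manifold. The same phenomenon occurs for $k=5$, e.g.\ a nontrivial homology $4$-sphere bounding a contractible $5$-manifold.) No homeomorphism $X\to S^{k-1}\times[0,1]$ can send such a $\Sigma$ to a level $S^{k-1}\times\{t\}$, and there is no ``$h$-cobordism theorem relative to a bicollar of an interior hypersurface'' to invoke; the relative $h$-cobordism theorem is a boundary condition, not a constraint on interior slices. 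A further, independent difficulty: the $X$ you build via two separate Mayer--Vietoris/van Kampen checks is an $h$-cobordism between the two small spheres $\partial D$ and $\partial D'$, but the two sub-cobordisms $C\setminus\mathrm{int}(D)$ and $C'\setminus\mathrm{int}(D')$ on either side of $\Sigma$ are \emph{not} $h$-cobordisms when $\pi_1(\Sigma)\neq 1$, since the inclusion of $\Sigma$ fails to be a $\pi_1$-isomorphism; so stacking two trivializations is also unavailable.

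The argument the paper has in mind sidesteps all of this by going up one dimension rather than staying in dimension $k$: cap off $W\cong S^k$ with a disk $D^{k+1}$. Rounding off, $D^{k+1}$ becomes a cobordism from $C$ to $C'$ whose lateral boundary is the mapping cylinder $M_\varphi$ of $\varphi:\partial C\to\partial C'$, and this is an $h$-cobordism rel $M_\varphi$ (exactly as in the proof of Lemma~\ref{l:cmfld}(2)). The relative topological $h$-cobordism theorem then produces a product structure $C\times[0,1]$ that is the identity on $C\times 0$ and the natural identification on $\partial C\times[0,1]\to M_\varphi$; restricting to $C\times 1$ yields $\Phi$ with $\Phi|_{\partial C}=\varphi$. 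Here the ``relative'' condition is genuinely a boundary condition, so the theorem applies; and no claim is made about $\Sigma$ being a standard sphere. I'd suggest replacing the ambient-involution construction with this capping-off argument.
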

Suppose $P$ and $P'$ are topological contractible $n$-manifolds with contractible faces and $N(P)=N(P')=N$.  It follows from Lemma~\ref{l:topcat} is that $P$ and $P'$ are toplogically isomorphic as manifolds with corners.  In other words, \emph{the topological resolution $\cone(N)$ is unique}.

\section{More facts about homology spheres and contractible manifolds}\label{s:basic}

\paragraph{Pseudo-isotopies on $S^n$.}  Given a smooth manifolds $M$ and $M'$,  denote by $\diff(M,M')$  the set of diffeomorphisms from $M$ to $M'$.  Let $\diff(M):=\diff(M,M)$, be the topological group of self-diffeomorphisms of $M$. If $M$ is oriented, $\diff^+(M)$ denotes the subgroup of orientation-preserving self-diffeomorphisms.  If $M$ is a manifold with boundary, then $\diff_\partial(M)$ is the subgroup of self-diffeomorphisms which are the identity on $\partial M$.

Let $\gGen$ denote the group of pseudo-isotopy classes of orientation preserving, self diffeomorphisms of $S^n$.   Let $\gi:\diff(D^{n+1})\to \diff(S^n)$ be the natural homomorphism which takes $\gf\in \diff(D^{n+1})$ to $\gf\vert_{S^n}$.  Here are three other equivalent definitions of $\gGen$:
	\begin{enumeratei}
	\item
	$\gGen= \diff(S^n)/\gi(\diff(D^{n+1}))$.  In other words, a diffeomorphism $\gf\in \diff(S^n)$ represents $0$ in $\gGen$ if and only if it 		extends to a self-diffeomorphism of $D^{n+1}$.
	\item
	$\gGen=\pi_0(\diff^+(S^n)$, the group of isotopy classes of orientation preserving diffeomorphisms of $S^n$.
	\item
	For $n+1\neq 4$, $\gGen=\gT_{n+1}$, the abelian group (under connected sum) of oriented diffeomorphism classes of smooth structures 	on $S^{n+1}$.  (Although $\gT_4$ is unknown, $\gG_4=0$.)
	\end{enumeratei}
Version (i) is obvious.  As for (ii), Cerf \cite{cerf1} proved that $\gG_4=0$, and  then in \cite{cerf2} that for simply connected manifolds of dimension $\ge 5$, pseudo-isotopic diffeomorphisms are isotopic.  Hence, for $n\ge 5$, $\gGen=\pi_0(\diff^+(S^n))$. It follows that $\gGen$ is a finite abelian group.

The next lemma is well-known and not difficult to prove.

\begin{lemma}\label{l:easy}
Let $f\in \diff^+(S^n)$.  Let $D'$ be a closed, smooth $n$-disk in $S^n$ and let $D^n$ be the closure of complementary $n$-disk.   Then $f$ is isotopic to a diffeomorphism $f'\in \diff^+(S^n)$ such that $f'(D')=D'$ and $f'\vert_{D'}$ is the identity.  Consequently, any element of $\gGen$ can be represented  by a diffeomorphism in $\diff_\partial (D^n)$.
\end{lemma}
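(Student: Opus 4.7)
My plan is to achieve both conclusions in a single application of the Palais--Cerf disk theorem: any two orientation-preserving smooth embeddings of $D^n$ into a connected, oriented smooth $n$-manifold are ambient isotopic (cf.~Palais, \emph{Comment.\ Math.\ Helv.}~34 (1960), or Hirsch, \emph{Differential Topology}, Theorem~8.3.1).

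First I apply this theorem to the two orientation-preserving smooth embeddings $D'\hookrightarrow S^n$, namely the inclusion $\iota$ and the restriction $f\vert_{D'}$. This yields an ambient isotopy $\Phi_t\in\diff^+(S^n)$ with $\Phi_0=\id$ and $\Phi_1\circ(f\vert_{D'})=\iota$. Set $f':=\Phi_1\circ f$. Then $t\mapsto\Phi_t\circ f$ is an isotopy inside $\diff^+(S^n)$ from $f$ to $f'$, and by construction $f'\vert_{D'}=\id_{D'}$. In particular $f'(D')=D'$, so $f'$ preserves the complementary disk $D^n$, and $f'\vert_{D^n}$ is a self-diffeomorphism of $D^n$ equal to the identity on $\partial D^n=\partial D'$; thus $f'\vert_{D^n}\in\diff_\partial(D^n)$.

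For the ``consequently'' clause, I observe that extension by the identity on $D'$ defines a map $\diff_\partial(D^n)\to\diff^+(S^n)$ whose composition with the projection onto $\gGen$ sends $f'\vert_{D^n}$ to the class $[f']\in\gGen$. Since $f$ and $f'$ are isotopic in $\diff^+(S^n)$, they represent the same pseudo-isotopy class, so $[f]=[f']$ is represented by $f'\vert_{D^n}\in\diff_\partial(D^n)$.

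The only real obstacle is locating and justifying the correct form of the disk theorem; once that is in hand, the argument is a purely formal rearrangement. If one wished to avoid invoking the theorem at full strength, a two-stage variant also works: first use the weaker statement (uniqueness of smoothly embedded disks up to ambient isotopy) to arrange $f(D')=D'$, so that $f\vert_{D'}\in\diff^+(D')$, and then invoke path-connectedness of $\diff^+(D')$ together with the isotopy extension theorem to deform $f\vert_{D'}$ to $\id_{D'}$ through an ambient isotopy of $S^n$.
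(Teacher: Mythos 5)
The paper gives no proof of this lemma (it is stated as ``well-known and not difficult to prove''), so there is no paper argument to compare against; I can only assess correctness. Your proof is correct, and it is the standard one. The Palais--Cerf disk theorem is exactly the right tool: both $\iota:D'\hookrightarrow S^n$ and $f\vert_{D'}:D'\hookrightarrow S^n$ are orientation-preserving embeddings of a disk into the connected oriented manifold $S^n$, so they are ambiently isotopic, and composing $f$ with the resulting ambient isotopy gives the desired $f'$ with $f'\vert_{D'}=\id_{D'}$. The ``consequently'' step is handled correctly: since $f'$ agrees with the extension by the identity of $f'\vert_{D^n}\in\diff_\partial(D^n)$, and $f$ is isotopic (hence pseudo-isotopic) to $f'$, the class of $f$ in $\gGen$ is represented by a diffeomorphism in $\diff_\partial(D^n)$. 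Your alternative two-stage argument (first move $D'$ to itself, then use connectedness of $\diff^+(D')$ and isotopy extension) is also valid.
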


\paragraph{Contractible manifolds and homology spheres.} The next two lemmas are well-known consequences of the smooth h-cobordism Theorem.
\begin{lemma}\label{l:homology}\textup{(cf.~\cite[Theorem on p.\,183]{levine}).}  Suppose $C$ and $C'$ are smooth, compact contractible $(n+1)$-manifolds with homeomorphic boundaries. If $n\neq 3$ or $4$, then $C$ and $C'$ are diffeomorphic.  In particular, if $n\neq 4$, $\bo C$ and $\bo C'$ are diffeomorphic.
\end{lemma}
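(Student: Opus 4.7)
The plan is to split the argument by dimension. For the low-dimensional range $n \le 2$ (so $\dim C = n+1 \le 3$), the lemma is essentially trivial: every compact contractible manifold of dimension at most $3$ is diffeomorphic to a disk (elementary for $n = 0, 1$; classical for $n = 2$; in dimension $3$ one uses the Poincar\'e conjecture together with Moise's theorem on smoothability of topological $3$-manifolds). Both $C$ and $C'$ are then standard disks with boundary standard spheres, and the assertion is immediate.

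The substance of the statement lies in the high-dimensional case $n \ge 5$. I would first invoke Lemma~\ref{l:topcat} to produce a homeomorphism $\Phi\colon C \to C'$ extending the given boundary homeomorphism; this realizes $C$ and $C'$ as two smoothings of one and the same contractible topological manifold. I would then apply Kirby--Siebenmann smoothing theory: for a topological manifold of dimension $\ge 5$, concordance classes of smoothings are classified by homotopy classes of maps into $\mathrm{TOP}/\mathrm{O}$, and this classifying set collapses to a point when the underlying manifold is contractible. Hence the original smoothing on $C$ and the $\Phi$-pullback of the smoothing on $C'$ are concordant, and Cerf's pseudo-isotopy theorem (valid in dimensions $\ge 5$, as recalled in the excerpt) promotes the concordance to an isotopy. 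The resulting straightening diffeomorphism composed with $\Phi$ yields a diffeomorphism $C \to C'$, whose restriction to the boundary furnishes the diffeomorphism $\bo C \cong \bo C'$ claimed in the ``in particular'' clause in this range.

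For the ``in particular'' clause in the remaining case $n = 3$ (which is excluded from the main statement because $4$-dimensional smooth h-cobordisms need not be products), I would argue separately: the boundaries are then smooth $3$-dimensional homology spheres, and by the uniqueness of smooth structures on topological $3$-manifolds (Moise), the given boundary homeomorphism can be isotoped to a diffeomorphism.

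The main obstacle I anticipate is the smoothing-theoretic step in the high-dimensional case. Two homeomorphic smooth homology $n$-spheres need not be diffeomorphic in general: their smoothings can differ by connected sum with an element of $\gT_n$, which is typically nontrivial for large $n$. What rescues the argument here is precisely the contractibility of the bounding manifolds $C$ and $C'$: the potentially obstructing classes live in cohomology groups of $C$ that vanish by contractibility, so the two smoothings become concordant and the boundary smoothings are forced to agree up to isotopy. Reconciling the relative-to-boundary version of the smoothing classification with the fact that the boundary homeomorphism is given only topologically is the delicate point that requires care.
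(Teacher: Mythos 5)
Your proof is essentially correct, but it takes a genuinely different path from the paper's.  The paper proceeds directly: given a homeomorphism $\gf\colon\bo C\to \bo C'$, form the closed topological manifold $C\cup_\gf C'$; since $C$ and $C'$ are contractible this is a homotopy $(n+1)$-sphere, hence homeomorphic (for $n+1\neq 4$) to $S^{n+1}$, and one can give it a compatible smooth structure.  A collar of $\bo C$ together with a collar of $\bo C'$ then provides a smooth h-cobordism from $\bo C$ to $\bo C'$, and attaching $D^{n+2}$ to $C\cup_\gf C'\cong S^{n+1}$ produces a relative h-cobordism from $C$ to $C'$, so the (relative) h-cobordism theorem finishes both clauses at once.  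Your route instead invokes Lemma~\ref{l:topcat} to extend $\gf$ to a homeomorphism $\Phi\colon C\to C'$ and then views the original structure and $\Phi^*(\text{structure on }C')$ as two smoothings of the same topological manifold, killed by $[C,\mathrm{TOP}/\mathrm{O}]=*$.  This works, but it carries heavier machinery and leaves you with exactly the two delicate points you flagged: (a) the Kirby--Siebenmann classification you quote is for the absolute case, and here $C$ has boundary, so you must either argue in the non-relative setting (concordance of pairs, not rel $\bo C$) or explain why the relative obstruction in $[C,\bo C;\mathrm{TOP}/\mathrm{O}]$ is irrelevant to the conclusion you want; and (b) what you should invoke at the last step is the concordance-implies-isotopy theorem of smoothing theory (Kirby--Siebenmann), not Cerf's pseudo-isotopy theorem, which concerns isotoping diffeomorphisms rather than straightening concordances of structures --- though of course either route ultimately reduces to the same h-cobordism input.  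The paper's argument is more economical: the gluing-and-capping construction hands you the relative h-cobordism directly, sidestepping $\mathrm{TOP}/\mathrm{O}$ altogether.  Your separate treatment of $n\le 2$ and of the $n=3$ boundary clause via Moise is correct and matches what the paper implicitly assumes.
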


\begin{proof}
Suppose $\gf:\bo C\to \bo C'$ is a homeomorphism.  Then $C\cup_\gf C'$ is homeomorphic to $S^{n+1}$.  Hence, it has a smooth structure in which it is diffeomorphic to $S^{n+1}$.  The union of a collared neighborhood of $\bo C$ in $C$ with a collared neighborhood of $\bo C'$ in $C'$,  gives a smooth h-cobordism from $\bo C$ to $\bo C'$; hence, for $n\neq 4$, they are diffeomorphic.  Filling in $C\cup_\gf C'$ with $D^{n+2}$ we get a smooth h-cobordism from $C$ to $C'$ which restricts to the given one from $\bo C$ to $\bo C'$.
\end{proof}

\begin{lemma}\label{l:simplytrans}  Let $M$ be a homology $n$-sphere and let $\gT_n(M)$ be the set of oriented diffeomorphism classes of smooth structures on $M$.  Suppose $n>4$.  Then the action of $\gT_n$ on $\gT_n(M)$ by connected sum is simply transitive.  In particular, if $M_0$ denotes the ``standard'' smooth structure on $M$ induced from the contractible manifold which it bounds,  then there is a bijection $\gT_n\to \gT_n (M)$ defined by $\gS\mapsto M_0\#\gS$.
\end{lemma}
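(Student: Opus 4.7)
The plan is to verify that the map $\gT_n \to \gT_n(M)$, $\gS\mapsto M_0\#\gS$, is a bijection, equivalently that the $\gT_n$-action on $\gT_n(M)$ by connected sum is simultaneously transitive and free. Note first that $M_0$ is well-defined as an oriented diffeomorphism class: if $M$ bounds two smooth contractibles $C$ and $C'$, then $C\cong C'$ by Lemma~\ref{l:homology}, so $\bo C\cong \bo C'$ orientedly.

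For \emph{transitivity}, given any $M_1\in\gT_n(M)$, I would apply Lemma~\ref{l:hsphere}~(3) (valid since $n>4$) to obtain $\gS_1\in\gT_n$ such that $M_1\#\gS_1=\bo C_1$ for some smooth contractible $(n+1)$-manifold $C_1$. By definition $M_0=\bo C_0$ for a smooth contractible $C_0$. Since $\gS_1$ is topologically $S^n$ by the topological Poincar\'e theorem in dimension $n\ge 5$, both $\bo C_0$ and $\bo C_1$ are homeomorphic to $M$. As $n+1\neq 4$, Lemma~\ref{l:homology} gives an oriented diffeomorphism $C_0\cong C_1$ which restricts to $M_0\cong M_1\#\gS_1$. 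Hence $M_1\cong M_0\#(-\gS_1)$, so the map is surjective.

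For \emph{freeness}, suppose $M_0\#\gS\cong M_0$ orientedly; I would aim to conclude $\gS\cong S^n$. The target is to produce a smooth contractible $(n+1)$-manifold $D$ with $\bo D=\gS$: given one, Lemma~\ref{l:homology} applied to $D$ and $D^{n+1}$ (both contractible with boundary homeomorphic to $S^n$) gives $D\cong D^{n+1}$, so $\gS\cong S^n$. To construct $D$, I would exploit the hypothesized diffeomorphism $\phi\!:\!M_0\to M_0\#\gS$ together with the contractible $C$ bounding $M_0$: for any smooth $(n+1)$-manifold $W$ bounding $\gS$, the boundary connected sum $C\,\natural\, W$ bounds $M_0\#\gS\cong M_0$, producing two manifolds ($C$ and $C\,\natural\, W$) with diffeomorphic boundary. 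A comparison via the smooth h-cobordism theorem should force $W$ to be contractible, supplying the required $D$.

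I expect the main obstacle to lie precisely here: making this comparison force $W$ to be contractible — equivalently, showing that the inertia group of $M_0$ is trivial. The argument relies on the smooth h-cobordism theorem, available since $n+1\ge 6$, and uses that $C$ (hence $C\,\natural\, W$) is simply connected so that the simply connected smooth h-cobordism theorem applies. This is precisely where the assumption $n>4$ is essential.
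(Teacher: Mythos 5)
Your transitivity argument is sound and is essentially the natural one: apply Lemma~\ref{l:hsphere}~(3) to turn $M_1$ into a bounder of a contractible manifold, then invoke Lemma~\ref{l:homology}. (One should start from an orientation-preserving homeomorphism $\bo C_0\to\bo C_1$ in the invocation of Lemma~\ref{l:homology} so that the resulting diffeomorphism is orientation-preserving and therefore gives an equality in $\gT_n(M)$; this is supported by the proof of that lemma even though its statement does not mention orientations.)

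The freeness argument, however, has a genuine gap, which you yourself flag. The step ``a comparison via the smooth h-cobordism theorem should force $W$ to be contractible'' cannot work as stated: $W$ is an \emph{arbitrary} smooth $(n+1)$-manifold with $\bo W=\gS$, and one may always replace $W$ by $W\#K$ for any closed simply connected $(n+1)$-manifold $K$ while keeping the boundary $\gS$ and the fact that $C\,\natural\,W$ has boundary $M_0\#\gS\cong M_0$. So nothing about the pair $(C,\,C\,\natural\,W)$ having diffeomorphic boundaries can ``force'' $W$ to be contractible. What is actually needed is the assertion that $\gS$ bounds \emph{some} smooth contractible (equivalently, acyclic) $(n+1)$-manifold, i.e.\ that $[\gS]=0$ in $\gT^H_n$ implies $\gS\cong S^n$. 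This is exactly the content of the theorem of Levine that the paper cites (\cite[p.~183]{levine}): for $n\geq 5$ the natural homomorphism $\gT_n\to\gT^H_n$ is a monomorphism (indeed an isomorphism). Granted that, freeness is immediate: if $M_0\#\gS\cong M_0$ orientedly, then $M_0\#\gS$ and $M_0$ are smoothly homology cobordant, so $[\gS]=[M_0\#\gS]-[M_0]=0$ in $\gT^H_n$, and injectivity gives $\gS\cong S^n$. So the one ingredient your argument is missing is precisely the one the paper's one-line proof supplies by reference; the rest of your write-up (especially transitivity, which the paper leaves implicit) is correct and helpfully more detailed than the paper's.
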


\begin{proof}
The induced action of $\gT_n$ on $\gT^H_n$ is simply transitive (cf.~\cite[p.~183]{levine}).
\end{proof}

Suppose that $M$ and $M'$  are  smooth, closed $n$-manifolds, that $D^n\subset M$ and $D^n\subset M'$ are embedded disks and that $g\in \diff(M,M')$ is such that $g\vert_{D^n}=\id_{D^n}$. Given $f\in\diff_\partial (D^n)$, 
let $\hat{f}:M\to M'$ to be the diffeomorphism defined by extending $f$ via $g\vert_{M-D^n}$. When $M'=M$ and $g=\id_M$, this gives a homomorphism $f\mapsto \hat{f}$ from $\diff_\partial (D^n)$ to $\diff (M)$ which descends to a homomorphism $\gl:\gGen\to  \pi_0 (\diff(M))$.

Next suppose $A$, $A'$ are smooth structures on a  compact, acyclic $(n+1)$-manifold with boundary.  We want to define left inverse for the action of $\gGen$ on $\pi_0(\diff (\bo A,\bo A'))$.
Suppose $\gf:\partial A\to \partial A'$ is a diffeomorphism.  Then $A\cup_\gf A'$ is a smooth homology $(n+1)$-sphere.  For $n+1> 4$, we get a well-defined element $\gamma(\gf)$ in  $\gT^H\none=\gT\none=\gGen$,
	\begin{equation}\label{e:gamma}
	\gamma(\gf)=[A\cup_\gf A'],
	\end{equation}
where $[A\cup_\gf A']$ denotes the class of $A\cup_\gf A'$ in $\gT^H\none$.  

From its definition, $\gamma$ is clearly a left inverse to the action of $\gGen$ defined by $\gl$.  Thus, if $f\in \diff_\bo(D^n)$, $\gl(f)=\hat{f}:\bo A\to \bo A'$, then $\gamma(\gl(f))$ is the class of $f$ in $\gGen$.

\begin{remark}\label{r:contractible}
In dimension $3$, we have $\gG_3=\gT_3=0$, and $\gT^H_3\neq 0$.  In dimension $4$, $\gG_4=\gT^H_4=0$, and $\gT_4$ is unknown.  So, when $n+1=3$ or $4$, if $A$ and $A'$ are contractible, then $A\cup _\gf A'$ bounds a contractible manifold and we can require $\gamma(\gf)$ to lie in $\gGen$ (which is $=0$).
\end{remark}

We  will sometimes identify $A\cup _\gf A'$ with the smooth homology sphere:
	\begin{equation}\label{e:hsphere}
	\gS(\gf):= A \cup M_\gf \cup A',
	\end{equation}
where $M_\gf$ is the mapping cylinder of the diffeomorphism $\gf:\partial A\to \partial A'$.  

Since $A$ is acyclic, so is $A\times [0,1]$.  Hence, $\gS(\id)=\partial (A\times [0,1])$ represents $0$ in $\gT^H\none$. 
So,  if $\gf$ extends to a diffeomorphism $\Phi:A\to A'$, we get a diffeomorphism from $A\cup_{\id} A$  to $A\cup_\gf A'$.  In other words, if $\gf$ extends to a diffeomorphism, then $\gamma(\gf)=0$.  For $n+1>4$ and for $A$ and $A'$ contractible, the converse  is a consequence of the h-cobordism Theorem as we show in the next lemma.

\begin{lemma}\label{l:cmfld}
Suppose $C$ and $C'$ are smooth, compact contractible $(n+1)$-manifolds and that $\gf:\bo C\to \bo C'$ is a diffeomorphism.
	\begin{enumerate1}
	\item
	$C\cup_\gf C'$ smoothly bounds a contractible $(n+2)$-manifold if and only if $\gamma(\gf)=0$.
	\item
	For $n+1\neq 4$, $\gf$ extends to a diffeomorphism $\Phi:C\to C'$ if and only if $\gamma(\gf)=0$.
	\end{enumerate1}
\end{lemma}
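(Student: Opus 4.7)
}

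For (1), one direction is immediate from the definition \eqref{e:gamma}: a smooth contractible $(n{+}2)$-manifold $W$ with $\partial W=C\cup_\gf C'$ is an acyclic nullcobordism, so $[C\cup_\gf C']=0$ in $\gT^H_{n+1}$. For the converse, when $n+1\in\{3,4\}$ the statement is automatic by Remark~\ref{r:contractible} (both sides hold unconditionally). When $n+1\geq 5$, Lemma~\ref{l:simplytrans} identifies $\gT^H_{n+1}$ with $\gT_{n+1}$, so $\gamma(\gf)=0$ yields a smooth acyclic $(n{+}2)$-manifold $V$ with $\partial V=C\cup_\gf C'$. Since $\pi_1(C\cup_\gf C')$ is the pushout $\pi_1(C)\ast_{\pi_1(\partial C)}\pi_1(C')=1$ by Seifert--van Kampen, standard surgery on $V$ in dimension $n+2\geq 6$ (kill $\pi_1(V)$ with $2$-handles, then kill the resulting $H_2$ with $3$-handles) converts $V$ into the required contractible $W$.

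For (2), the forward implication follows from (1): an extension $\Phi:C\to C'$ makes $\Phi\cup\id_C$ a diffeomorphism $C\cup_\gf C'\to\partial(C\times[0,1])$, and $C\times[0,1]$ is contractible, so $\gamma(\gf)=0$. For the reverse implication when $n+1\geq 5$, apply (1) to obtain a contractible $W$ with $\partial W=C\cup_\gf C'$, and recognize $W$ as a trivial relative $h$-cobordism. The plan: choose a bicollar of $\partial C\subset\partial W$ compatible with $\gf$, together with a compatible smooth tubular neighborhood of $\partial C$ in $W$. Shrinking $C$ and $C'$ off the bicollar and straightening the corner, one presents $W$ as a smooth manifold with corners whose codimension-one faces are copies of $C$ and $C'$ (meeting the ``side'' $\partial C\times[-\epsilon,\epsilon]$ along their boundaries) and whose codimension-two faces are $\partial C\cong_\gf\partial C'$. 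Contractibility of $W$, $C$, $C'$ renders the inclusions $C\hookrightarrow W\hookleftarrow C'$ homotopy equivalences and gives $\pi_1(W)=1$, so the relative (simply-connected) $h$-cobordism theorem applies in dimension $n+2\geq 6$ and produces a diffeomorphism $W\cong C\times[0,1]$ restricting to the identity on $C$, to $\gf$ on $\partial C$, and mapping $C\times\{1\}$ onto $C'$; this last restriction is the desired $\Phi$.

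The low dimensions $n+1\in\{1,2,3\}$ (with $n+1=4$ excluded) are automatic: the classification of $1$- and $2$-manifolds and the $3$-dimensional Poincar\'e Conjecture give $C\cong D^{n+1}\cong C'$, and every $\gf\in\diff(S^n)$ extends over $D^{n+1}$ since $\gG_{n+1}=0$ throughout this range.

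The main obstacle lies in the relative $h$-cobordism step for $n+2\geq 6$: one must correctly formulate the ``relative'' structure on the triad $(W;C,C')$ via the corner/side along $\partial C\cong_\gf\partial C'$, and verify that the homotopical hypotheses (inclusions being homotopy equivalences, simple connectivity) transfer from the absolute contractibility data. Once this setup is in hand, both parts follow from standard smooth topology.
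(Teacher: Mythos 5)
Your proposal follows essentially the same strategy as the paper: part (1) by interpreting $\gamma(\gf)=0$ via $\gT^H_{n+1}$, part (2) by capping off $C\cup_\gf C'$ with a contractible manifold and invoking the relative $h$-cobordism theorem (with low dimensions disposed of via the classification of $1$-, $2$-, $3$-manifolds and $\gG_k=0$). The argument for part (2) in particular matches the paper's quite closely.

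The one place where you deviate, and where your write-up is genuinely sketchy, is the reverse implication of (1) for $n+1\geq 5$. You first deduce only that $C\cup_\gf C'$ bounds an \emph{acyclic} manifold $V$ (which follows from the definition of $\gT^H_{n+1}$ as homology-cobordism classes, not really from Lemma~\ref{l:simplytrans}), and then propose to surger $V$ to be contractible. That surgery step is not as routine as ``kill $\pi_1$ with $2$-handles, then kill the resulting $H_2$ with $3$-handles'': killing $\pi_1$ creates classes in $H_2$ and, by duality, in $H_{n}$; killing $H_2$ in turn affects $H_{n-1}$; one needs normal-bundle triviality and Poincar\'e--Lefschetz duality to see the process terminate, and this is precisely the nontrivial content of Kervaire-type theorems. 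The paper avoids this entirely. Since $C$ and $C'$ are contractible, $C\cup_\gf C'$ is a simply connected homology $(n+1)$-sphere, hence a homotopy sphere; Lemma~\ref{l:simplytrans} then says the map $\gT_{n+1}\to\gT^H_{n+1}$ given by connected sum with the standard sphere is a bijection, so $\gamma(\gf)=0$ forces $C\cup_\gf C'$ to be the \emph{standard} $S^{n+1}$, which already bounds $D^{n+2}$ — no surgery needed. I would replace your acyclic-then-surger step with this direct observation; as written it is the one genuine gap in an otherwise correct argument.
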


\begin{proof}
(1) When $n+1=3$, the $3$-dimensional Poincar\'e Conjecture (cf.~\cite{perelman}) implies that $C\cup_\gf C' = S^3=\bo D^4$.  For $n+1=4$, by Lemma~\ref{l:hsphere}~(3), any smooth structure on $S^4$ bounds a contractible $5$-manifold.  For $n+1>4$, by Lemma~\ref{l:simplytrans}, $\gamma(\gf)=0$ if and only if $C\cup_\gf C'$ bounds a contractible manifold.

(2) As explained in the paragraph preceding this lemma, if $\gf$ extends to $\Phi$, then $\gamma(\gf)=0$.  Conversely, suppose $\gamma(\gf)=0$.  Then, since $\gS(\gf)=S^{n+1}$, we can fill in $\gS(\gf)$ with an $(n+2)$-disk to obtain an h-cobordism $W$ from $C$ to $C'$.  By the h-cobordism Theorem (which is true for $n+1\neq 4$), there is a diffeomorphism $\Phi\,:\,C\times [0,1] \to W$ such that $\Phi(C\times 0)=C$, $\Phi(C\times 1) = C'$,  $\Phi(\partial C\times [0,1]) =M_\gf$ and moreover, $\Phi\vert_{C\times 0} = \id$ and $\Phi\vert _{\partial C\times [0,1]}$ is the natural identification onto $M_\gf$.  Thus, $\Phi\vert_{\partial C\times 1} =\gf$ and $\Phi\vert_{C\times 1}$ is the desired extension.
\end{proof}

In the next section we will need the following lemma to establish that an obstruction cochain is a cocycle.

\begin{lemma}\label{l:holes}
Let $E$ be a smooth, compact contractible $(n+1)$-manifold and let
$F_1,\cdots , F_m$ be disjoint compact contractible submanifolds of codimension $0$ in $\bo E$.  Let $X^n=\bo E-\bigcup _{i=1}^{m} F^\circ_i $, 
where $F^\circ_i$ denotes the interior of $F_i$.  
Suppose $f:X^n\to X^n$ is a diffeomorphism which takes each $\bo F_i$ to itself.  Let $\gamma_i\in \gGen$ be the obstruction to extending $f\vert_{\bo F_i}$ to $F_i$.  Then $\gamma_1 +\cdots +\gamma_m =0$.
\end{lemma}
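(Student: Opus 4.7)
The plan is to realize the obstruction classes $\gamma_1,\dots,\gamma_m$ as the classes of the boundary components of a single smooth $(n{+}1)$-dimensional manifold of very rigid homology, and to deduce the cocycle relation $\sum\gamma_i=0$ from that global structure.

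First I would form the smooth manifold $W:=E\cup_f E$, obtained by gluing two copies of $E$ along $X\subset\bo E$ via the identification $x\sim f(x)$ and smoothing the resulting corners along $\bo X=\bigsqcup_i \bo F_i$ (possible because $f(\bo F_i)=\bo F_i$). The boundary of $W$ decomposes into a disjoint union $\bigsqcup_{i=1}^m\Sigma_i$, where $\Sigma_i:=F_i\cup_{f|_{\bo F_i}}F_i$ is a closed smooth homology $n$-sphere. Directly from the definition (\ref{e:gamma}) of $\gamma$, the class $[\Sigma_i]$ in the relevant obstruction group is $\gamma_i$.

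Next I would compute $\tilde H_*(W)$ by two Mayer-Vietoris arguments. Applying it first to $\bo E=X\cup\bigsqcup_i F_i$, and using that $\bo E$ is a homology $n$-sphere, each $F_i$ is contractible, and each $\bo F_i$ is a homology $(n{-}1)$-sphere, the connecting homomorphism $H_n(\bo E)=\zz\to H_{n-1}(\bigsqcup_i \bo F_i)=\zz^m$ sends the fundamental class to the diagonal element $(1,\dots,1)$. Hence $\tilde H_*(X)$ is concentrated in degree $n{-}1$ where it equals $\zz^{m-1}$. Applying Mayer-Vietoris a second time to $W=E_1\cup_X E_2$ with both copies of $E$ contractible shifts degrees by one, so $\tilde H_*(W)$ is concentrated in degree $n$ and equal to $\zz^{m-1}$.

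Finally I would promote $W$ to an acyclic null-cobordism for $\Sigma_1\#\cdots\#\Sigma_m$. Attach $(m{-}1)$ disjoint 1-handles to $W$, each joining $\Sigma_i$ to $\Sigma_{i+1}$ in a tree pattern, so that the resulting manifold $W^\natural$ has $\bo W^\natural=\Sigma_1\#\cdots\#\Sigma_m$. Lefschetz duality gives $H_n(W)\cong H^1(W,\bo W)\cong\zz^{m-1}$, which identifies the $H_n$-generators with relative 1-classes connecting distinct boundary components of $W$—exactly the classes represented by the new 1-handle cores. Pairing along this duality, attach $(m{-}1)$ cancelling 2-handles whose attaching circles each traverse one 1-handle core together with a dual arc in $W$; these 2-handles simultaneously kill both the new $H_1$-generators and the preexisting $H_n$-generators of $W$, producing an acyclic $(n{+}1)$-manifold with boundary $\Sigma_1\#\cdots\#\Sigma_m$. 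This exhibits $\sum\gamma_i=0$.

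The main obstacle I expect is the last step: arranging the 2-handle attachments so that their attaching circles actually implement the Lefschetz-duality pairing, and in particular so that a single collection of $(m{-}1)$ 2-handles annihilates both $H_1$ and $H_n$ at once. The Mayer-Vietoris computation in the second step already indexes the $H_n$-generators of $W$ by pairs of boundary components, matching the combinatorics of the 1-handles; the delicate part is to confirm that this combinatorial match can be realized geometrically by an actual choice of embedded 2-handles.
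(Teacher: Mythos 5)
Your Mayer--Vietoris computations are correct, and the identification $\gamma_i=[\Sigma_i]$ with $\Sigma_i=F_i\cup_{f|_{\bo F_i}}F_i$ is exactly the right starting point. But the final step is not a technicality to be cleaned up: it is wrong as an idea. For $n\ge 3$, attaching a $2$-handle to an $(n{+}1)$-manifold can never change $H_n$. The pair $(W',W^\natural)$ has $H_*(W',W^\natural)\cong\bigoplus H_*(D^2,S^1)$ concentrated in degree $2$, so the long exact sequence gives $H_n(W^\natural)\cong H_n(W')$ whenever $n\neq 1,2$. Since $H_n(W^\natural)\cong\zz^{m-1}$, no choice of $m{-}1$ two-handle attaching circles can produce an acyclic manifold. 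The Lefschetz-duality heuristic that ``killing $H_1$ should kill $H_n$ by duality'' breaks down precisely because the surgery on $\bo W^\natural$ induced by a $2$-handle attachment changes the boundary; the resulting boundary is no longer the homology sphere you want (or, in the cancelling case where it is, the handle pair cancels and $W'\cong W$, whose boundary has $m$ components again). Either way, you don't arrive at an acyclic manifold bounded by $\Sigma_1\#\cdots\#\Sigma_m$.

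The paper's proof sidesteps this global cobordism construction entirely. It never forms the double $E\cup_f E$. Instead, it connects the $F_i$ by disjoint arcs in $\bo E$ so that the union of the $F_i$ with tubular neighborhoods of the arcs is a ``tree of contractible $n$-manifolds,'' i.e., a boundary connected sum $\natural F_i$, which is contractible. Its complement $A\subset X^n$ is then a compact \emph{acyclic} $n$-manifold (by Alexander duality in the homology sphere $\bo E$) with $\bo A=\bo F_1\#\cdots\#\bo F_m$. One arranges $f$ to be the identity near the arcs, so $f$ restricts to a self-diffeomorphism of $A$; in particular $f|_{\bo A}$ \emph{tautologically} extends over the acyclic manifold $A$, hence its obstruction class $\gamma(f|_{\bo A})$ vanishes by the discussion preceding Lemma~\ref{l:cmfld}. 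Since $f|_{\bo A}$ is the connected sum of the $f|_{\bo F_i}$, that obstruction is $\gamma_1+\cdots+\gamma_m$. This uses the single crucial observation you didn't exploit: the diffeomorphism $f$ itself already provides an extension over an acyclic piece, so no auxiliary null-cobordism has to be manufactured.
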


\begin{proof}
Choose disjoint embedded paths (``edges'') connecting the $\bo F_i$ so that the union of the edges with the $F_i$ is simply connected (giving a ``tree of contractible $n$-manifolds'' in $\bo E$). We can  arrange that $f$ restricts to the identity map on a tubular neighborhood of each edge.  Let $A$ denote the complement of these neighborhoods of edges in $X^n$.  In other words, $A$ is the complement of a regular neighborhood of the tree of contractible $n$-manifolds.   It follows that $A$ is a compact acyclic $n$-manifold, that $\bo A$ is the connected sum, $\bo F_1\#\cdots \#\bo F_m$ and that $f$ takes $A$ to itself.  The restriction of $f$ to $\bo A$ represents the element $\gamma_1 +\cdots +\gamma_m \in \gGen$.  Since $f$ extends over $A$, by paragraph preceding Lemma~\ref{l:cmfld}, this element is $0$.
\end{proof}

\begin{remark}\label{r:akb}
For $n+1=4$, Lemma~\ref{l:cmfld}~(2) is unknown even when $C$ and $C'$ are both homeomorphic to $D^4$.  (This is equivalent to the smooth $4$-dimensional Poincar\'e Conjecture.)  When $\bo C$ and $\bo C'$ are allowed to be homology $3$-spheres with nontrivial fundamental groups, 
Lemma~\ref{l:cmfld}~(2) does not hold.  Indeed, Akbulut \cite{ak} has shown that there exist compact contractible smooth $4$-manifolds $C$ and $C'$ with $\partial C= \partial C'$, such that $C$ and $C'$ are not diffeomorphic rel $\partial C$ (cf.~Theorem~\ref{t:akb} below).
\end{remark}

\section{Uniqueness}\label{s:unique}
A $k$-dimensional face $F$ of an $n$-dimensional manifold with faces has a tubular neighborhood of the form $F\times [0,\infty)^{n-k}$, where $[0,\infty)^{n-k} $  denotes the standard simplicial cone in $\rr^{n-k}$.  This is an easy consequence of the Collared Neighborhood Theorem for manifolds with boundary.  Given a smooth $n$-dimensional manifold with faces $P$, let  $\bohat P$ denote its topological boundary.  (N.B. We write $\bohat P$ instead of $\bo P$ to indicate that the corners have not been rounded.) 
The tubular neighborhoods of the faces can be fit together compatibly to give a neighborhood $e(P)$ of $\bohat P$.  If $P$ is smooth,  we can push $\bohat P$ into the interior of $e(P)$ to get a smooth manifold $\bo P$ ($\bo P$ is $\bohat P$ with  corners rounded).  The manifold $\bo P$ separates $P$ into two pieces, one is a neighborhood of $\bohat P$ and the other is homeomorphic to $P$.  It should not cause confusion to continue to denote the second piece by $P$.

Suppose that  $N$ is a $\GHS^{n-1}$ and that $P$, $P'$ are smooth contractible manifolds with contractible faces with $N(P)=N(P')=N$.  For each $\gs \in N$, let $F_\gs$ and $F'_\gs$ be the corresponding faces of $P$ and $P'$. We will try to construct a diffeomorphism $\gf:P\to P'$ one face at a time.  Since the faces of dimension $\le 3$ are cells, we can define $\gf$ on a neighborhood of the $3$-skeleton of $P$. It is unique up to isotopy.  Suppose $\gs$ is a  simplex of codimension $4$.  Then we have a diffeomorphism $\gf:\bo F_\gs\to \bo F'_\gs$ (which we can assume is the identity).  The problem is to extend it.  According to Remark~\ref{r:akb}, Lemma~\ref{l:cmfld}~(2) does not apply. This leads us to the following.

\begin{definition}\label{d:4}
Suppose, as above,  $P$ and $P'$ are smooth contractible manifolds with contractible faces and $N(P)=N(P')=N$.  Then $P$ and $P'$ have \emph{diffeomorphic $4$-dimensional faces} if for each simplex $\gs$ of codimension $4$ in $N$, $F_\gs$ and $F'_\gs$ are diffeomorphic rel boundary.
\end{definition}

Our goal is to prove the following, which is equivalent to Theorem~\ref{t:diffI}.

\begin{theorem}\label{t:main}
Suppose  $P$ and $P'$ are smooth contractible manifolds with contractible faces and $N(P)=N(P')=N$.  Then $P$ and $P'$ are diffeomorphic if and only if they have diffeomorphic $4$-dimensional faces.
\end{theorem}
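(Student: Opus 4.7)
The ``only if'' direction is immediate, as any diffeomorphism of manifolds with corners preserves the stratification by dimension, so corresponding $4$-dimensional faces are diffeomorphic. For the converse, I will build a strata-preserving diffeomorphism $\varphi:P\to P'$ by induction on the dimension $d$ of faces, where the pairing $F_\sigma\leftrightarrow F'_\sigma$ is supplied by $N(P) = N(P') = N$.

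For $d\le 3$, each $F_\sigma$ is a compact contractible $d$-manifold with homology $(d{-}1)$-sphere boundary, hence diffeomorphic to $D^d$ (invoking the $3$-dimensional Poincar\'e Conjecture when $d=3$). Since $\pi_0\diff^+(S^{d-1}) = 0$ for $d-1\le 2$, every $\varphi|_{\bo F_\sigma}$ extends across $F_\sigma$, yielding $\varphi$ on the entire $3$-skeleton, uniquely up to isotopy. For $d=4$, the boundary identification $\varphi|_{\bo F_\sigma}:\bo F_\sigma\to\bo F'_\sigma$ is canonical up to isotopy; normalising it to the identity, the hypothesis of Definition~\ref{d:4} is the precise statement that it extends to a diffeomorphism $F_\sigma\to F'_\sigma$, delivering $\varphi$ on the $4$-skeleton.

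For $d\ge 5$ the plan is obstruction theory. Assuming $\varphi$ is defined on the $(d-1)$-skeleton, I set $\gamma_\sigma:=\gamma(\varphi|_{\bo F_\sigma})\in\Gamma_d$ via \eqref{e:gamma} for each $d$-face $F_\sigma$; by Lemma~\ref{l:cmfld}(2) the extension across $F_\sigma$ exists iff $\gamma_\sigma = 0$. The cochain $c:=\{\gamma_\sigma\}$ sits in the cellular cochain complex $C^\bullet(P;\Gamma_d)$ of the CW structure on $P$ whose cells are its open faces. To see that $c$ is a cocycle, for each $(d+1)$-face $F_\tau$ I will apply (the obvious two-sided version of) Lemma~\ref{l:holes} with $E=F_\tau$, taking the $F_i$ to be slightly shrunken disjoint copies of its $d$-sub-faces inside the corner-rounded $\bo F_\tau$ (made disjoint by excising collars of the $(d-1)$-sub-faces); the identity $\sum_i\gamma_i = 0$ produced by the lemma is precisely $(\delta c)(F_\tau) = 0$. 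Because $P$ is contractible, $H^d(P;\Gamma_d) = 0$, so $c = \delta a$ for some $a\in C^{d-1}(P;\Gamma_d)$. By Lemma~\ref{l:easy}, each $a(G)\in\Gamma_d$ is represented by some $\mu_G\in\diff_\bo(G)$ supported in an interior disk of the $(d-1)$-face $G$; replacing $\varphi|_G$ by $\varphi|_G\circ\mu_G$ leaves $\varphi$ unchanged on the $(d-2)$-skeleton and converts the obstruction cochain to $c-\delta a = 0$, so $\varphi$ extends to the $d$-skeleton. Iterating up to $d=n$ (the top face $P$ itself being the single-cell final case of the same argument) produces the desired diffeomorphism.

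The critical difficulty is the $4$-dimensional step: Lemma~\ref{l:cmfld}(2) fails there (cf.~Remark~\ref{r:akb}) because the smooth h-cobordism theorem breaks down, the obstruction group $\gG_4$ vanishes and is therefore uninformative, and $\pi_0\diff_\bo(D^3) = 0$ removes any freedom to repair the $3$-skeleton extension. The hypothesis of diffeomorphic $4$-dimensional faces is designed precisely to bridge the gap between the rigid low-dimensional data and the smooth obstruction theory that operates freely for $d\ge 5$.
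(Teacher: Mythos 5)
Your proof is correct and follows essentially the same path as the paper's: handle dimensions $\le 3$ via Poincar\'e, absorb the $4$-dimensional step into the hypothesis of Definition~\ref{d:4}, then run obstruction theory for $d\ge 5$ using Lemma~\ref{l:cmfld}(2) for the obstruction, Lemma~\ref{l:holes} for the cocycle condition, Lemma~\ref{l:easy} for the coboundary freedom, and acyclicity of $P$ to kill the cohomology class. You are in fact slightly more careful than the paper in flagging the need for a two-sided variant of Lemma~\ref{l:holes} (the paper applies the self-map version directly) and in getting the coefficient group right as $\Gamma_d$ on $d$-cochains where the paper's notation $C^k(P;\gG^k)$ versus $\gGekko$ is garbled.
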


Once we have constructed a $\gf$ taking a $(k-1)$-face $F$ to the corresponding $(k-1)$-face $F'$ of $P'$, we extend it to a map of tubular neighborhoods, $F\times [0,\infty)^{n-k} \to F' \times [0,\infty)^{n-k}$, which is linear on the $[0,\infty)^{n-k}$ factors.  So, suppose $\gf$ has been defined on the $(k-1)$-skeleton of $P$ and that we want to extend it to a map on a $k$-face, $F\to F'$. Since we have extended to tubular neighborhoods,  we have a diffeomorphism $\gf:e(F)\to e(F')$ restricting to a diffeomorphism $\bo F\to \bo F'$.  The problem is to extend this to a diffeomorphism $F\to F'$.  There might be an obstruction. 

\begin{proof}[Proof of Theorem~\ref{t:main}]
The faces of $P$ and $P'$ of dimension $\le 2$ are standard cells.  Since the $3$-dimensional Poincar\'e Conjecture is true (cf. \cite{perelman}, \cite{mt}), the $3$-dimensional faces are also standard. It follows that we can define $\gf$ on a neighborhood of the $3$-skeleton and this definition is unique up to isotopy.  The hypothesis that the $4$-dimensional faces of $P$ and $P'$ are diffeomorphic implies that  $\gf$ can be extended over a  neighborhood of the $4$-skeleton.

Suppose, by induction, that we have constructed $\gf$ on a tubular neighborhood of the $(k-1)$-skeleton, with $k-1\ge 4$.  One needs to choose orientations for the faces of $P$ in order to define the cellular cochains on $P$.  Let $F$ be an oriented $k$-face.  Then $\gf\vert_{\bo F}: \bo F\to \bo F'$ is defined.  As in the paragraph preceding Lemma~\ref{l:cmfld} there is an element $c(F)\in \gGekko$ defined by
	\[
	c(F)=\gamma(\gf)=[F\cup_\gf F'] \in \gT\kone=\gGekko.
	\]
By Lemma~\ref{l:cmfld}~(2), $\gf\vert_{\bo F}$ extends across $F$ if and only if $c(F)$ vanishes.  The assignment $F\mapsto c(F)$ is a cellular cochain $c\in C^k(P;\gG^k)$. It  is the obstruction to extending $\gf$ across the $k$-skeleton.  We have to check two  points in order for this  obstruction theory to work:
	\begin{enumerate1}
	\item
	We are free to vary the construction of $\gf$ on the $(k-1)$-skeleton.  If we fix $\gf$ on the $(k-2)$-skeleton and vary its extension over 		the $(k-1)$-faces, then $c$ should change by a coboundary.
	\item 
	$c$ should be a cocycle.
	\end{enumerate1}
	
To check (1) suppose $G$ is an oriented $(k-1)$-face and $\gamma \in \gGek$.  Let $d_{G,\gamma}\in C^{k-1}(P;\gGek)$ be the cochain which assigns $\gamma$ to $G$ and $0$ to all other $(k-1)$-faces.  Next we want to alter $\gf$ on the $(k-1)$-skeleton by changing $\gf\vert _G$ by the element $\gamma$ (where $\gamma$ is thought of as a diffeomorphism of $D^{k-1}$ which is the identity on $\bo D^{k-1}$). Denote the new map on the $(k-1)$-skeleton by $\psi$.  So, we have two cochains $c_\gf$, $c_\psi \in C^k(P;\gGek)$.  How are they related?  Let $F$ be an oriented $k$-face such that $G$ occurs in $\bohat F$ with coefficient $\pm 1$.  Denote this coefficient by $[G\!:\!F]$ (and call it the \emph{incidence number}).  Then $c_\gf$ and $c_\psi$ are related by the formula
	\[
	c_\psi (F)=c_\gf (F) +[G\!:\!F] \,d_{G,\gamma}.
	\]
That is to say, $c_\psi = c_\gf + \gd(d_{G,\gamma})$, where $\gd:C^{k-1}(P;\gGek) \to C^k(P;\gGek)$ is the coboundary map.  This establishes (1).

As for (2) we want to check that $c$ ($=c_\gf$) is a cocycle.  Let $E$ be a $(k+1)$-face and let $F_1,\dots, F_m$ the $k$-faces of $\bohat E$, oriented by outward-pointing normals.  (This means that all incidence numbers $[F_i:E]$ are $=1$.)   Apply Lemma~\ref{l:holes} with $X$ a regular neighborhood in $E$ of the $(k-1)$-skeleton of $E$ and with $\gamma_i= c(F_i)$ to get:
	\[
	\gd(c)(E)=c(\bohat E)= \sum_{i=1}^m c(F_i) =0 .
	\]
Hence, $c$ is a cocycle.

We use (1) and (2) to complete the proof.  Suppose $c\in C^{k-1}(P;\gGek)$ is the obstruction cochain.
Since $c$ is a cocycle and  $P$ is acyclic, we have $c=\gd(d)$ for some $d\in C^{k-1}(P;\gGek)$.  If we use $-d$ to alter $\gf$ on the $(k-1)$-skeleton,  the new $c$ will be identically $0$.  Hence, we can extend over the $k$-skeleton.
\end{proof}

\paragraph{Nonuniqueness.}  In the language of resolutions, Theorem~\ref{t:notdiff} can be restated as follows.

\begin{theorem}\label{t:notdiff2}
For each $n\ge 4$ there is a generalized homology $(n-1)$-sphere $N$ with two distinct smooth resolutions.
\end{theorem}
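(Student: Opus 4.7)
The plan is to amplify Akbulut's theorem (Theorem~\ref{t:akb} below): there exist compact smooth contractible $4$-manifolds $C$, $C'$ sharing a common smooth boundary $M$ (a homology $3$-sphere) such that $C$ and $C'$ are not diffeomorphic rel~$\partial$. For the base case $n=4$, I would choose a PL triangulation $\mathcal T$ of $M$ rigid enough that its only simplicial automorphism is the identity (arranged by sufficient barycentric subdivision with distinguishing labels), so that $M$ becomes a $\GHS^{3}$ and both $C$ and $C'$ inherit the structure of smooth contractible $4$-manifolds with contractible faces whose nerve is $\mathcal T$, via the dual-cell decomposition. Thus $P:=C$ and $P':=C'$ are smooth resolutions of $\cone(\mathcal T)$ with $N(P)=N(P')=\mathcal T$. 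To see $P\not\cong P'$ as manifolds with corners, suppose $\Phi\colon P\to P'$ is such a diffeomorphism; then $\Phi$ induces a simplicial automorphism of $\mathcal T$ which is the identity by rigidity, so $\Phi|_{\partial P}\colon M\to M$ preserves the dual cell decomposition setwise. Using $\pi_{0}\diff(D^{k},\partial D^{k})=0$ for $k\le 3$ (the case $k=3$ being Cerf's theorem), one isotopes $\Phi|_{\partial P}$ to $\id_{M}$ by induction on the skeleton of the dual complex, and then extends the isotopy into a collar of $\partial P$ to modify $\Phi$ into a diffeomorphism $C\to C'$ that is the identity on $\partial$, contradicting Akbulut.

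For $n\ge 5$ I would realize $M$ as a link inside a bigger generalized homology sphere: let $L$ be a rigid PL triangulation of $S^{n-5}$ and set $N:=L*\mathcal T$, a triangulated $\GHS^{n-1}$ whose link at any top-dimensional simplex $\gs\in L$ equals $M$. By Theorem~\ref{t:resolution}(3), and the vanishing of all higher-dimensional obstructions by acyclicity (as in the sketch there), smooth resolutions of $\cone(N)$ exist and the $4$-face dual to $\gs$ may be chosen independently to be either $C$ or $C'$. Fix a top-dimensional $\gs_{0}\in L$ and let $P$, $P'$ be resolutions agreeing in every choice except that $F_{\gs_{0}}=C$ in $P$ while $F_{\gs_{0}}=C'$ in $P'$. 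Since the vertices on the two sides of the join have non-isomorphic links in $N$ (those on the $L$-side have a non-trivial homology-sphere factor coming from $\mathcal T$), $\Aut(N)$ splits as $\Aut(L)\times\Aut(\mathcal T)=\{\id\}$. Any face-preserving diffeomorphism $P\to P'$ would therefore restrict to a diffeomorphism $C\to C'$ on the $4$-face at $\gs_{0}$, which by the isotopy argument of the previous paragraph could be made the identity on $\partial$, again contradicting Akbulut.

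The main obstacle is the smooth patching step in the $n=4$ argument: one must assemble the isotopies produced on individual dual $3$-cells into a globally smooth ambient isotopy of $M$ without disturbing the face structure, and then extend this isotopy smoothly into $P$ via a collar. This requires standard smooth isotopy extension together with Cerf's theorem in dimension $3$, as well as care in producing a sufficiently rigid triangulation so that the simplicial automorphism step is truly trivial.
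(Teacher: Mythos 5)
Your proof follows the same blueprint as the paper's: use Akbulut's $C,C'$ to fill in a dual--cell resolution of a triangulation of the Mazur boundary for $n=4$, and for $n\ge 5$ pass to a join that has $\partial C$ as the link of a codimension-$4$ simplex. You add one genuinely useful precision that the paper elides: a priori a diffeomorphism of manifolds with corners $P\to P'$ could induce a nontrivial simplicial automorphism of $N$, in which case $\Phi|_{\partial}$ need not be isotopic to $\id_M$ and Akbulut's rel-$\partial$ statement does not immediately apply. Arranging for $N$ (resp.\ $L$) to have trivial simplicial automorphism group is precisely what is needed to rule this out, and the subsequent isotopy on the $3$-skeleton using $\gG_4=0$ and a collar--extension to reduce to the rel-$\partial$ case is correct.

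That said, two rough edges in your write-up deserve attention. First, the phrase ``arranged by sufficient barycentric subdivision with distinguishing labels'' does not achieve what you want: resolutions carry no labels (that is only available in the Coxeter--orbifold formulation of Theorem~\ref{t:notdiff}), and barycentric subdivision of a symmetric complex is still symmetric. You need a different mechanism to make the triangulation rigid (e.g.\ a generic subdivision, or iterated stellar subdivisions producing vertices of unique local combinatorics); such rigid triangulations do exist, but the argument you give for their existence is not correct as stated. Second, your $n\ge 5$ recipe asks for a rigid PL triangulation of $S^{n-5}$, which is impossible when $n=5$ since $S^0$ has an unavoidable $\zz/2$ symmetry; so $\Aut(L*\mathcal T)\cong\zz/2$ rather than being trivial, and you would need either to subdivide the suspension asymmetrically near one pole, or to argue separately that the suspension flip cannot produce the missing extension. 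Both issues are repairable and do not affect the overall soundness of the approach, but they are the places where the argument as written does not yet close.
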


This is more or less an immediate consequence of the following important result of Akbulut \cite{ak}.

\begin{theorem}\label{t:akb}\textup{(Akbulut \cite{ak}).}
There exist two smooth, compact contractible $4$-manifolds $C$ and $C'$ with the same boundary so that $C$ and $C'$ are not diffeomorphic rel boundary.
\end{theorem}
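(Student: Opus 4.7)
The plan is to exhibit an explicit pair by a \emph{cork twist} construction, following the Mazur-manifold paradigm. First, build a contractible $4$-manifold $W$ whose boundary $\Sigma = \partial W$ carries a visible involution $\tau\colon \Sigma \to \Sigma$, then show $\tau$ does not extend to a self-diffeomorphism of $W$, and finally set $C = W$ and let $C'$ denote $W$ with its boundary identification pre-composed by $\tau$. Since $\tau$ is a diffeomorphism of $\Sigma$, this gives two smooth compact contractible $4$-manifolds sharing a common boundary; the failure of $\tau$ to extend is exactly the statement that $C$ and $C'$ are not diffeomorphic rel $\partial$.

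For the first step I would take a Mazur-type handlebody: start with $S^1 \times D^3$ and attach a single $2$-handle along a knot $K \subset S^1 \times S^2$ that maps to a generator of $\pi_1(S^1 \times D^3) = \mathbb{Z}$, with framing chosen so that the resulting $W$ is contractible (the $2$-handle kills the $\pi_1$-generator, and $H_2$ vanishes for framing reasons). The boundary $\Sigma$ is then a specific homology $3$-sphere (often realizable as a Brieskorn sphere or a Seifert-fibered manifold), whose symmetries are concrete enough to pick out a preferred involution $\tau$ coming either from a symmetry of the handle picture (swap the two attaching points) or from the Seifert structure.

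The crux is showing $\tau$ does not extend over $W$. The strategy is indirect, via gauge theory on a closed ambient manifold. Embed $W$ smoothly as a codimension-$0$ submanifold of a suitable closed smooth $4$-manifold $X$ whose Donaldson (or Seiberg--Witten) invariants are known and nontrivial: natural choices are an elliptic surface or a small blow-up of one arranged so that $W$ sits inside it in a controlled way. Form the \emph{cork twist} $X_\tau := (X \setminus \operatorname{int} W) \cup_\tau W$. Because $W$ is contractible and $\tau$ is a homeomorphism of $\Sigma$, Freedman's classification of simply-connected topological $4$-manifolds with boundary guarantees $\tau$ extends to a \emph{homeomorphism} of $W$; hence $X$ and $X_\tau$ are homeomorphic. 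If one can then compute the Donaldson polynomial invariants of $X$ and $X_\tau$ and show they differ, a smooth extension of $\tau$ over $W$ is ruled out, since such an extension would yield a diffeomorphism $X \cong X_\tau$.

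The hard step, and the main technical content, is the gauge-theoretic calculation showing $X$ and $X_\tau$ have distinct Donaldson invariants. This requires a relative/gluing formula for the polynomial invariants along $\Sigma$, together with a verification that the contributions from the two $W$-pieces are distinguished by the action of $\tau$ on the relevant instanton moduli spaces over $\Sigma$. Once this is in hand, the pair $(C,C') = (W, W_\tau)$ furnishes the required example; moreover, by taking boundary connected sums with $D^n \times [0,1]$ copies or by crossing with standard factors one can produce examples in every dimension $n \ge 4$, but the irreducible input is the $4$-dimensional construction just outlined.
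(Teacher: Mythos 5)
The statement you are proving is quoted from Akbulut's paper and is not reproved here: the text simply cites \cite{ak} and records that in Akbulut's construction $C$ is the Mazur manifold. Your outline does follow the historically correct strategy (a Mazur-type handlebody $W$, a boundary involution $\tau$, and a gauge-theoretic obstruction to extending $\tau$ smoothly, detected after embedding $W$ in a closed manifold and comparing invariants of $X$ and the twist $X_\tau$), so as a description of the shape of Akbulut's argument it is on target.

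However, as a proof it has a genuine gap: the entire content of the theorem is the non-extension of $\tau$, and your argument for it is conditional. You never fix the data --- the knot $K$, the framing, the involution $\tau$, the closed manifold $X$, or the embedding $W\hookrightarrow X$ --- and the decisive step is phrased as ``if one can then compute the Donaldson polynomial invariants of $X$ and $X_\tau$ and show they differ.'' Without a specific $(W,\tau,X)$ and an actual computation (or citation of one), nothing has been proved; the Freedman-extension remark only shows the pair is topologically standard, which is not needed for the statement and does not help with the smooth obstruction. Note also that your closing remark about propagating to all dimensions $n\ge 4$ by boundary connected sums or products is both outside the statement and misleading: by the h-cobordism arguments used elsewhere in this paper (Lemmas~\ref{l:homology} and \ref{l:cmfld}), the rel-boundary phenomenon you want is special to dimension $4$, and the higher-dimensional nonuniqueness in Theorem~\ref{t:notdiff2} is obtained by using $C$ and $C'$ as $4$-dimensional faces of resolutions, not by crossing with standard factors. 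To repair the write-up you should either carry out the gauge-theoretic comparison for a concrete cork (as Akbulut does) or simply cite \cite{ak}, as the paper itself does.
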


In Akbulut's construction $C$ is the Mazur manifold.  (This was the first example of a compact, contractible $4$-manifold whose boundary is a  homology sphere that is not simply connected.)

Here are a few remarks about the proof of \ref{t:notdiff2}.  First suppose $n=4$.  Let $N$ be a triangulation of the boundary of the Mazur manifold.  The dual cell structure gives a resolution of $N$.  We can get two different smooth resolutions $P$ and $P'$ of $\cone(N)$ by filling in $C$ and $C'$, respectively.  Since the diffeomorphism $\bo P \to \bo P'$ is isotopic to the identity, it cannot be extended to a diffeomorphism $P\to P'$.

For $n>4$, let $N$ be any $\GHS^{n-1}$ which has $\bo C$ as the link of a  simplex of codimension $4$.  For example, one could take $N$ to be the join $\bo C * S^{n-5}$ (or any subdivision of this).  By filling in each $4$-dimensional face with copy of $C$ or $C'$ one obtains distinct resolutions.

\section{Reflection groups}\label{s:reflec}
\paragraph{Smooth equivariant  rigidity of reflection groups.} Associated to a Coxeter orbifold $Q$, there is a Coxeter system $(W,S)$,  where the set of generators $S$ corresponds to the set of codimension one faces of $Q$.  $W$ is the orbifold fundamental group of $Q$.  If $L(W,S)$ denotes the nerve of the Coxeter system, then properness of the labeling means that $N(Q)\subset L(W,S)$.  If $L(W,S)=N(Q)$, then the universal cover $\wt{Q}$ of $Q$ is contractible (cf.~\cite{d83} or \cite{dbook}), i.e.,  $Q$ is aspherical.  If this is the case, then $\wt{Q}=\ue W$, the universal space for proper $W$-actions.  Since $L(W,S)$ is a $\GHS^{n-1}$, $W$ is a Coxeter group of type $\HM^n$ (cf.\ \cite[Chapter 10]{dbook}).

Now suppose $Q'$ is another aspherical Coxeter orbifold with associated Coxeter system $(W',S')$ and with $W\cong W'$.  By the universal properties of $\ue W$ and $\ue W'$, there is a $W$-homotopy equivalence $\wt{Q}\to \wt{Q}'$. It is proved in \cite{cd} that $L(W,S)$ and $L(W',S')$ are isomorphic, i.e., that $Q$ and $Q'$ are combinatorially equivalent. From this we get the rigidity theorem of Prassidis-Spieler in \cite{ps}: $\wt{Q}$ and $\wt{Q}'$ are $W$-equivariantly homeomorphic. When combined with Theorem~\ref{t:diffI}, this gives the following.

\begin{theorem}\label{t:diffrig}\textup{(cf.~Prassidis-Spieler \cite{ps}).}
Suppose $Q$ and $Q'$ are aspherical Coxeter orbifolds with isomorphic Coxeter groups.  Then $\wt{Q}$ and $\wt{Q}'$ are equivariantly diffeomorphic if and only if corresponding $4$-dimensional faces of $Q$ and $Q'$ are diffeomorphic. 
\end{theorem}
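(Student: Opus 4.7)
The plan is to prove the two implications separately. The ``only if'' direction is routine: a $W$-equivariant diffeomorphism $\Phi\colon \wt{Q}\to\wt{Q}'$ descends to the orbit spaces to give a diffeomorphism $Q\to Q'$ of smooth manifolds with corners, which restricts to a diffeomorphism between every pair of corresponding faces, in particular the $4$-dimensional ones.

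For the ``if'' direction, I would proceed in three steps. First, from the hypothesis $W\cong W'$ and the Charney--Davis rigidity result \cite{cd} cited in the paragraph preceding the statement, I get a label-preserving simplicial isomorphism $L(W,S)\to L(W',S')$. Since $Q$ and $Q'$ are aspherical, $N(Q)=L(W,S)$ and $N(Q')=L(W',S')$, so this is a combinatorial equivalence of the two Coxeter orbifolds. Second, apply Theorem~\ref{t:diffI}: together with the hypothesis on $4$-dimensional faces, the combinatorial equivalence is promoted to a diffeomorphism $\phi\colon Q\to Q'$ of smooth manifolds with corners realizing it. Third, lift $\phi$ to a $W$-equivariant diffeomorphism $\wt{Q}\to\wt{Q}'$ using the universal property of the orbifold universal cover: an isomorphism of smooth reflection-type orbifolds lifts to an equivariant diffeomorphism of universal covers, equivariant with respect to the induced isomorphism of deck groups, which by construction agrees with the given isomorphism $W\cong W'$ up to an inner automorphism.

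The main obstacle I anticipate is in the second step, where one must check that the diffeomorphism $\phi$ produced by Theorem~\ref{t:diffI} actually respects the full reflection-orbifold structure, not just the underlying manifold-with-corners structure. What needs verification is that the dihedral labels on codimension-two strata are preserved by $\phi$. This should follow because $\phi$ is strata-preserving and is constructed face-by-face along the combinatorial equivalence $N(Q)\to N(Q')$ supplied by Charney--Davis, which is label-preserving by definition; so corresponding codimension-two strata carry the same integer label and $\phi$ is an isomorphism of smooth reflection orbifolds, after which the equivariant lift is standard.
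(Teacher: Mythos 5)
Your proof is essentially the same as the paper's, which is given compactly in the paragraph preceding the theorem statement: Charney--Davis gives the (label-preserving) combinatorial equivalence $N(Q)\cong N(Q')$ from $W\cong W'$, Theorem~\ref{t:diffI} then yields a strata-preserving diffeomorphism $Q\to Q'$ of manifolds with corners, and this lifts to a $W$-equivariant diffeomorphism of universal covers. Your worry in the second step is correctly self-resolved: the diffeomorphism produced by Theorem~\ref{t:diffI} realizes precisely the combinatorial equivalence furnished by Charney--Davis, which is label-preserving because it comes from the Coxeter-group isomorphism, so the resulting strata-preserving diffeomorphism matches codimension-two strata carrying identical labels and is therefore an isomorphism of reflection-type orbifolds.
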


In other words, the equivariant, smooth version of the Borel Conjecture holds for aspherical Coxeter orbifolds if and only if their $4$-dimensional faces are diffeomorphic. Theorem~\ref{t:notdiff} yields the following.

\begin{theorem}\label{t:fake}
For each $n\ge 4$, there are $n$-dimensional Coxeter orbifolds $Q$ and $Q'$ which are combinatorially equivalent but not diffeomorphic (hence, $\wt{Q}$ and $\wt{Q}'$ are not equivariantly diffeomorphic).
\end{theorem}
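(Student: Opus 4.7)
The strategy is to upgrade the pair of non-diffeomorphic smooth resolutions produced by Theorem~\ref{t:notdiff2} to a pair of aspherical Coxeter orbifolds by equipping their common nerve with a right-angled labeling. Theorem~\ref{t:notdiff2} is stated purely at the level of manifolds with corners, so the edge labels on $N$ remain entirely at our disposal; we use that freedom to simultaneously force properness of the labeling and asphericity of the resulting orbifolds.

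Fix $n\ge 4$ and apply Theorem~\ref{t:notdiff2}, arranging moreover that the triangulation $N$ is a flag complex. This is harmless: the construction of $N$ in the proof of Theorem~\ref{t:notdiff2} requires only that some codimension-four link be identifiable with $\bo C = \bo C'$, and by passing to a sufficiently fine subdivision (for instance, an iterated barycentric one) we may assume $N$ is flag without disturbing this link condition. Let $P$ and $P'$ be the two resulting smooth resolutions of $\cone(N)$; they are not diffeomorphic as manifolds with corners.

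Now label every edge of $N$ with the integer $2$, producing a right-angled Coxeter system $(W,S)$. A subset $T\subset S$ generates a finite special subgroup of this $W$ iff its elements pairwise commute iff $T$ is a clique in the $1$-skeleton of $N$; flagness of $N$ identifies such cliques with simplices of $N$. Hence the labeling is proper and $L(W,S) = N$. Invoking the uniqueness (up to isotopy) of the reflection-type orbifold structure determined by a labeled smooth manifold with corners (\cite[Section~17]{d83}, recalled in the Introduction), I promote $P$ and $P'$ to smooth Coxeter orbifolds $Q$ and $Q'$ with nerve $N$ and this labeling. The criterion at the beginning of Section~\ref{s:reflec} then forces asphericity: $\wt Q = \wt{Q}' = \ue W$ and the common orbifold fundamental group is $W$.

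Combinatorial equivalence of $Q$ and $Q'$ is built in, since they carry identical labeled nerves. For non-diffeomorphism, any diffeomorphism $Q\to Q'$ of smooth orbifolds restricts to a stratum-preserving diffeomorphism of underlying smooth manifolds with corners $P\to P'$, contradicting Theorem~\ref{t:notdiff2}. The parenthetical claim about universal covers is then immediate: an equivariant diffeomorphism $\wt Q\to\wt{Q}'$ descends to the quotient orbifolds $Q = \wt Q/W$ and $Q' = \wt{Q}'/W$, and such a descent has just been excluded. The only point requiring any thought is the arrangement of a flag $N$ that still realizes the Akbulut link in codimension four, and this is essentially routine; beyond that, the proof is a transparent repackaging of Theorem~\ref{t:notdiff2} via the right-angled construction.
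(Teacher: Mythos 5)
Your argument is correct and follows essentially the same route as the paper: the paper's own proof of Theorem~\ref{t:fake} is the one-sentence remark that one should replace $N$ by its barycentric subdivision and label all edges by $2$, and you have simply made the mechanism explicit (barycentric subdivision yields a flag complex, and a right-angled labeling on a flag nerve gives $L(W,S)=N(Q)$, hence asphericity via the criterion at the start of Section~\ref{s:reflec}). Both you and the paper leave the same minor point implicit --- namely that subdividing $N$ still produces a $\GHS^{n-1}$ admitting two non-diffeomorphic smooth resolutions --- so your level of rigor matches the original.
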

There is no problem in arranging for $Q$ and $Q'$ to be aspherical.  For example, if $N$ is a $\GHS^{n-1}$  discussed in the last two paragraphs of the previous section, we can insure asphericity by replacing $N$ by its barycentric subdivision and labeling all edges $2$.

The following question remains open.
\begin{question}
Are there fake smooth closed aspherical $4$-manifolds?
\end{question}

If $Q$ and $Q'$ are aspherical $4$-dimensional orbifolds as in Theorem~\ref{t:fake}, then by passing to a subgroup of finite index in the Coxeter group, we obtain smooth aspherical $4$-manifolds $M$ and $M'$ with the same fundamental group.  They might or might not be diffeomorphic.  A variation of this would be to change only one chamber of $M$ from $C$ to $C'$.  It seems plausible that some such construction could yield homeomorphic but not diffeomorphic aspherical manifolds.

\section{Torus actions}\label{s:torus}
\paragraph{The moment angle manifold.}
There is a standard linear action of the $m$-torus $T^m$ on $\cc^m$.  The orbit space is $\rr_+^m$, where $\rr_+=[0,\infty)$.   The orbit map $p:\cc^m\to \rr_+^m$ can be defined by $p(z_1,\dots, z_m)=(|z_1|^2,\dots, |z_m|^2)$.

Suppose $P$ is an $n$-dimensional smooth manifold with faces with $m$ faces of codimension one, $F_1, \dots, F_m$.  Let $f:P\to \rr_+^m$ be a map such that the inverse image of the coordinate hyperplane $x_i=0$ is $F_i$.  Moreover, $f$ is required to be transverse to each of these hyperplanes.  Next we will construct a smooth $(n+m)$-manifold $Z_P$ with a smooth $T^m$-action and with orbit space $P$.  The \emph{moment-angle manifold} $Z_P$ corresponding to $P$ is defined by the pullback diagram:
\begin{equation}\label{e:pullback}
	\begin{CD}
	Z_P @>>> \cc^m\\
	@VVV @VVpV\\
	P@>f>> \rr_+^m
	\end{CD}
\end{equation}
In other words, $Z_P=f^*(\cc^m):=\{(x,z)\in P\times \cc^m \mid f(x)=p(z)\}$.  By the tranversality hypothesis,  $Z_P$ inherits the structure, as a subset of $P\times \cc^m$, of a smooth $(n+m)$-manifold.   Moreover, the $T^m$-action on the second factor induces a smooth $T^m$-action on $Z_P$.  As we will see in Proposition~\ref{p:moment} below, up to  an equivariant diffeomorphism inducing the identity on $P$, the moment angle polytope is independent of the choice of $f$.

The $T^m$-action on $Z_P$ is \emph{modeled on the standard representation} in the sense that its orbit types and normal representations occur among those of $T^m$ on $\cc^m$.  The principal orbits (i.e., the maximal orbits) are isomorphic to $T^m$.  The principal orbit bundle is trivial (since it is the pullback of the principal orbit bundle for $T^m$ on $\cc^m$).

\begin{Remarks}\label{r:pullback}
For $G$ a compact Lie group, the question of pulling back smooth  $G$-manifolds, which are modeled on certain types of representions,  from their linear models was the topic of my PhD thesis (cf.~\cite{d78} and \cite{d81}).  In these papers I was mainly concerned with cases such as $G=O(n)$, $U(n)$ or $Sp(n)$ where the linear model was a multiple of the standard representation.  The case of the standard $T^m$-action on $\cc^m$ is probably the simplest special case of the general theory developed in \cite{d81}.  Here are a few remarks concerning the general theory.

a) Suppose  a smooth $G$-manifold $M$ is modeled on a linear representation of $G$ on a vector space $V$.  An equivariant map between $G$-manifolds is \emph{isovariant} if it preserves isotropy subgroups and \emph{transverse isovariant} if its differential induces isomorphisms between normal representations to the strata.  A transverse isovariant map $F:M\to V$ induces a map $f:M/G\to V/G$ of orbit spaces and an equivariant diffeomorphism, $M\cong f^*(V)$, where $f^*(V)$ is defined as in \eqref{e:pullback}.  Thus,  $M$ is a pullback of its linear model if and only if it admits a transverse isovariant map to its linear model.

b)  For $B=M/G$, consider maps $B\to V/G$ which are strata-preserving and ``transverse"" to the strata in some obvious sense (for short, \emph{stratified maps}).  Let $f_0$ and $f_1$ be two maps from $B$ to $V/G$ which are homotopic through stratified maps.  It is a consequence of G. Schwarz's Covering Homotopy Theorem that $f_0^*(V)\cong f_1^*(V)$ (cf.~\cite{schwarz}).

c) Suppose $\ga$ is the ``normal orbit type'' corresponding to $(H, E)$ where $H$ is an isotropy subgroup and $E$ is the normal representation to the stratum of $G/H$ orbits.  The normal bundle to this stratum is a bundle over the corresponding stratum $B_\ga$ in the orbit space.  The fiber of this bundle over $B_\ga$ is $G\times_H E$.  After choosing a metric on the normal bundle, the structure group reduces to the group $S_\ga:=N_H(G\times O(E))/H$ (where $O(E)$ is the orthogonal group of $E$ and $N_H(G\times O(E))$ is the normalizer of $H$ in $G\times O(E)$).    Let $S_\prin$ be the structure group for the principal orbit bundle.  For some linear models, $S_\ga$ is a subgroup of $S_\prin$.  If the principal orbit bundle of $M$ is trivial, we get a map $B_\ga\to S_\prin/S_\ga$ called the \emph{$\ga$-twist invariant} of $M$.  If, as in the situations of interest in \cite{d81}, the $\ga$-stratum of $M/G$ is homotopy equivalent to $S_\prin/S_\ga$, the twist invariants can be used to provide a transverse isovariant map $M\to V$.
\end{Remarks}

\begin{proposition}\label{p:moment}
Suppose $Y^{m+n}$ is a smooth $T^m$-manifold, modeled on the standard representation and that the bundle of principal orbits is trivial.  Let $P =Y/T^m$ and let $F_i$ be the codimension one face of $P$ whose isotropy subgroup is the coordinate circle $T_i$  Then $Y$ is the pullback of the linear model via a stratified map $f:P\to \rr_+^m$ which takes $F_i$ into the hyperplane $x_i=0$. Hence, $Y$ is $T^m$-equivariantly diffeomorphic to the moment angle manifold $Z_P$ defined in \eqref{e:pullback}.
\end{proposition}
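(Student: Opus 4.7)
The strategy is to apply Remark~\ref{r:pullback}(a): it suffices to construct a transverse isovariant map $F:Y\to\cc^m$, for then $f=F/T^m$ automatically sends $F_i$ into the hyperplane $\{x_i=0\}$ and $y\mapsto(\pi(y),F(y))$ gives the desired equivariant diffeomorphism $Y\cong f^{*}(\cc^m)$. We will write $F=(F_1,\ldots,F_m)$ where each $F_i:Y\to\cc$ is required to be $T^m$-equivariant with $T^m$ acting on $\cc$ through the character $\chi_i:T^m\to T_i$ by scalar multiplication, and build the $F_i$ one stratum at a time.

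By the equivariant slice theorem, a $T^m$-tubular neighborhood of $Q_i:=\pi^{-1}(F_i^\circ)$ in $Y$ is $T^m$-diffeomorphic to the disk bundle of a $T^m$-equivariant complex line bundle $L_i\to Q_i$ with $T_i$ acting on fibers via $\chi_i$. Since $T^m/T_i$ acts freely on $Q_i$ with quotient $F_i^\circ$, this bundle descends to a complex line bundle $\bar L_i\to F_i^\circ$, and a smooth $T^m$-equivariant $F_i$ vanishing transversely exactly along $Q_i$ is the same data as a trivialization of $\bar L_i$ together with a nowhere-zero $T^m$-equivariant extension over the complement of the tube.

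The hypothesis that the principal orbit bundle $Y_\prin\to P_\prin$ is trivial forces each $\bar L_i$ to be trivial. Restricting $Y_\prin$ to the deleted tube $\bigl(F_i^\circ\times(0,\epsilon)\bigr)\cap P_\prin$, the slice description identifies this restriction, as a principal $T^m$-bundle, with the pullback to $F_i^\circ\times(0,\epsilon)$ of the fiber product over $F_i^\circ$ of $Q_i\to F_i^\circ$ (a principal $T^m/T_i$-bundle) with the unit circle bundle $S(\bar L_i)\to F_i^\circ$ (a principal $T_i$-bundle). Such a product is classified by a pair of maps into $B(T^m/T_i)$ and $BT_i$, so triviality of the total $T^m$-bundle forces triviality of each factor independently; in particular $\bar L_i$ is a trivial complex line bundle.

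Fix trivializations of the $\bar L_i$ and of $Y_\prin$ compatibly; on the principal part this lets us define $F_i(\phi(x,t))=h_i(x)\,t_i$, where $h_i:P\to[0,\infty)$ is a smooth function that is positive off $F_i$, vanishes transversely along $F_i$, and is compatible near $F_i^\circ$ with the chosen trivialization of $\bar L_i$. Such $h_i$ are produced by the standard collar construction for smooth manifolds with corners. Because $T^m$ is abelian, the isotropy along a deeper face $F_I^\circ:=\bigcap_{i\in I}F_i^\circ$ is $T_I=\prod_{i\in I}T_i$, and the slice representation there splits canonically as $\bigoplus_{i\in I}\cc_i$ with weights $\chi_i$; consequently the local pieces assemble without further compatibility into a single smooth $T^m$-equivariant map $F=(F_1,\ldots,F_m):Y\to\cc^m$ that is transverse isovariant by construction. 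The main obstacle is the line-bundle triviality step; once that is in hand, the rest is a routine smoothing-by-collar argument.
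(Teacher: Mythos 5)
Your overall strategy --- constructing a transverse isovariant map $F:Y\to\cc^m$ and invoking Remark~\ref{r:pullback}~(a) --- is exactly the paper's strategy; the difference lies in how $F$ is produced. The paper's (also very brief) proof cites the twist-invariant machinery of \cite{d81}: since every $S_\ga = T^m = S_\prin$, each twist invariant takes values in the point $S_\prin/S_\ga$, and the general theory in Remark~\ref{r:pullback}~(c) then hands you the transverse isovariant map, while Remark~\ref{r:pullback}~(b) gives the final ``hence $Y\cong Z_P$''. You instead build $F$ by hand from the normal line bundles $\bar L_i$, arguing their triviality from that of the principal orbit bundle. That triviality argument is correct: because $T_i$ is a coordinate circle, $T^m$ splits canonically as $T_i\times T_i^\perp$, so a principal $T^m$-bundle is precisely a pair of a $T_i$-bundle and a $T_i^\perp$-bundle, and triviality of $Y_\prin$ near $F_i^\circ$ does force both factors, in particular $S(\bar L_i)$, to be trivial. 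Where you under-sell the work is the assembly step, which you call ``routine.'' Near a codimension-$k$ face $F_I$ one must choose trivializations of the several $\bar L_i$, $i\in I$, that are simultaneously compatible with each other and with the chosen trivialization of $Y_\prin$, and moreover one must argue inductively over the strata that such compatible choices can be made; this is not automatic but an obstruction-theoretic statement, and the reason it works is precisely that the relevant target $S_\prin/S_\ga$ is a point --- which is what the paper's twist-invariant formulation packages up. So your proof is a legitimate, more hands-on unpacking of the same argument, with the triviality of $\bar L_i$ actually the easy part and the inductive compatibility the real content. You should also close the loop on the final sentence of the proposition: having shown $Y\cong f^*(\cc^m)$ for some stratified $f$, one still needs Remark~\ref{r:pullback}~(b) (homotopy uniqueness of stratified maps to $\rr_+^m$) to conclude $f^*(\cc^m)\cong Z_P$.
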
 

\begin{proof}[Sketch of Proof]
In the case at hand, each $S_\ga=T^m = S_\prin$, so the range of each twist invariant is a point.  Since each stratum of $\rr_+^m$ is contractible, the twist invariants can be used to construct a map $Y\to \cc^m$ inducing $P\to \rr_+^m$ (cf.~Remark~\ref{r:pullback}~c)).  For the same reason, any two maps to $\rr_+^m$ are homotopic through stratified maps; so, by Remark~\ref{r:pullback}~b), any two pullbacks of $\cc^m$ are equivariantly diffeomorphic. Hence, $Y\cong Z_P$.
\end{proof}

\begin{corollary}
Suppose $Y$ and $Y'$ are smooth $T^m$-manifolds which are modeled on the standard representation and which have trivial bundles of principal orbits.   Then $Y$ is equivariantly diffeomorphic to  $Y'$ if and only if $Y/T^m$ and $Y'/T^m$ are diffeomorphic as manifolds with corners (via a strata-preserving diffeomorphism).
\end{corollary}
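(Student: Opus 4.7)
The plan is to derive the corollary as a straightforward consequence of Proposition~\ref{p:moment}, which identifies any such $Y$ with the moment-angle manifold $Z_P$ of its orbit space. The forward direction is essentially tautological: if $\Phi:Y\to Y'$ is an equivariant diffeomorphism, then it descends to a diffeomorphism $\phi:Y/T^m\to Y'/T^m$ of the orbit spaces, and since the stratification of each orbit space is intrinsically defined by orbit type, $\phi$ is automatically strata-preserving; the only thing to note is that $\Phi$ carries the face $F_i$ of $P$ (with isotropy the coordinate circle $T_i$) onto the face $F'_i$ of $P'$, so the labeling of codimension-one faces is respected.

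For the backward direction, suppose $\phi:P\to P'$ is a strata-preserving diffeomorphism sending $F_i$ to $F'_i$. By Proposition~\ref{p:moment}, $Y\cong Z_P$ and $Y'\cong Z_{P'}$, where the latter is built from a stratified map $f':P'\to \rr_+^m$ sending $F'_i$ into the hyperplane $x_i=0$. I would then set $f := f'\circ\phi:P\to \rr_+^m$; since $\phi$ sends $F_i$ to $F'_i$ and $f'$ is stratified and transverse to the coordinate hyperplanes, so is $f$. From the universal property of pullbacks one then gets a canonical $T^m$-equivariant diffeomorphism
\[
f^*(\cc^m)=(f'\circ\phi)^*(\cc^m)\;\cong\;\phi^*\bigl((f')^*(\cc^m)\bigr)=\phi^*(Z_{P'}),
\]
and the projection $\phi^*(Z_{P'})\to Z_{P'}$ covering $\phi$ is itself a $T^m$-equivariant diffeomorphism, since $\phi$ is a diffeomorphism of the base.

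To close the argument, one notes that by Remark~\ref{r:pullback}~b) the equivariant diffeomorphism type of $Z_P$ is independent of the choice of stratified map $P\to \rr_+^m$: any two such maps are homotopic through stratified maps (the space of stratified maps to $\rr_+^m$ realizing a given face-labeling is contractible), so their pullbacks are $T^m$-equivariantly diffeomorphic by Schwarz's Covering Homotopy Theorem. Composing the chain $Y\cong Z_P=f^*(\cc^m)\cong\phi^*(Z_{P'})\cong Z_{P'}\cong Y'$ then gives the desired equivariant diffeomorphism. The only real subtlety is the bookkeeping in the first step, namely that the ``strata-preserving'' hypothesis on $\phi$ must be taken to preserve the isotropy-labeling $F_i\mapsto F'_i$; if one only assumes a strata-preserving diffeomorphism up to a permutation of the coordinate circles, then the conclusion must be weakened to equivariance up to a corresponding automorphism of $T^m$. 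Aside from this point, no obstruction theory is needed --- the corollary is a direct corollary of Proposition~\ref{p:moment} and the naturality of the pullback construction.
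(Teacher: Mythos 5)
Your proposal is correct and is essentially the proof the paper leaves implicit: the corollary is stated as an immediate consequence of Proposition~\ref{p:moment}, and your argument (forward direction by descent, backward direction by identifying $Y\cong Z_P$, $Y'\cong Z_{P'}$ and using the naturality of the pullback together with Remark~\ref{r:pullback}~b)) is the intended one. Your closing remark about the labeling $F_i\mapsto F'_i$ is a fair and worthwhile caveat: since each $Y$ is modeled on the standard representation, the codimension-one faces of $Y/T^m$ carry an intrinsic labeling by the index of the coordinate circle serving as isotropy, and ``strata-preserving diffeomorphism'' in the corollary should indeed be read as respecting this labeling; under a label-permuting diffeomorphism one only obtains a weakly equivariant diffeomorphism (equivariant with respect to the corresponding coordinate-permutation automorphism of $T^m$), exactly as you observe.
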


\paragraph{Quasitoric manifolds.}  A $T^n$-action on a manifold $M^{2n}$ is \emph{locally standard} if it is locally modeled on the standard representation $(\cc^n,T^n)$, up to automorphisms of $T^n$.  (N.B.\ Because we are allowing ourselves to vary the local actions by automorphisms of $T^n$, a locally standard action is usually \emph{not} modeled on the standard representation.  For example, in the standard representation only the coordinate circles $T_1,\dots, T_n$ occur as isotropy subgroups, while in a locally standard action it is possible to have more isotropy subgroups isomorphic to $S^1$.)
The orbit space $P$ of a locally standard action on $M^{2n}$ is a smooth $n$-manifold with corners.  In \cite{mp} Masuda and Panov impose the following additional requirement:
\medskip

($*$) $P$ is an acyclic manifold with acyclic faces.

\medskip

\noindent
(Such a $P$ is called a ``homology polytope'' in \cite{mp}.)  From now on we suppose  that $P$ satisfies ($*$).  $M^{2n}$ is a \emph{quasitoric manifold} if $P$ is a simple convex polytope (cf.~\cite{dj91}).  Let $F_1,\dots, F_m$  be the codimension one faces of $P$.  The isotropy subgroup at an interior point of $F_i$ is a subgroup $\gL_i$ isomorphic to $S^1$.  If $F_{i_1}\cap \cdots \cap F_{i_k} \neq \emptyset$, then $\gL_{i_1}, \dots ,\gL_{i_k}$ span a $k$-dimensional subtorus of $T^n$.  The subgroup $\gL_i$ is determined by a vector $\gl_i\in \Hom (S^1,T^m)\cong \zz^m$, well defined up to sign.  This defines an epimorphism $\gl=(\gl_1,\dots, \gl_m):T^m\to T^n$ called the \emph{characteristic function}.  We will sometimes also view $\gl$ as a homomorphism  $\zz^m \to \zz^n$.) It is observed in \cite{mp} that, as in \cite{dj91},  $M^{2n}$ is determined up to equivariant homeomorphism by $P$ and the characteristic function $\gl$.  In fact, as we will see in Proposition~\ref{p:locally} below, we can replace ``equivariant homeomorphism'' by  ``equivariant diffeomorphism.'' 

Put $T=T^n$ and let $M_T:=ET\times _T M$ be the Borel construction.  So, $M_T$ is a bundle over $BT$ with fiber $M$.  Masuda and Panov prove in \cite[Theorem 1]{mp} that when ($*$) holds, the cohomology of $M$ vanishes in odd degrees and that its cohomology is  genenerated by degree two classes.  It follows that the Serre spectral sequence for $M_T\to B_T$ degenerates at $E_2$ and that we have an isomorphism of $H^*(BT)$-modules:
\[
H^*(M_T)\cong H^*(BT) \otimes H^*(M).
\]
In particular, there is a short exact sequence,
	\begin{equation}\label{e:se}
	0\to H^2(BT)\to H^2(M_T)\to H^2(M)\to 0.
	\end{equation}

Given $P$, an acyclic manifold with acyclic faces, and a characteristic function $\gl:T^m\to T$, we construct a smooth $T$-manifold $M_{(P,\gl)}$ as follows.  Put $H=\Ker \gl$.  Then $H\cong T^{m-n}$ and $\gl$ induces an isomorphism $\ol{\gl}:T^m/H\to T$.  The subgroup $H$ acts freely on the moment angle manifold $Z_P$.  So, the quotient
	\begin{equation}\label{e:reduc}
	M_{(P,\gl)}=Z_P/H
	\end{equation}
is a smooth manifold with a smooth action of $T^m/H$.  After using $\ol{\gl}^{-1}$ to identify $T$ with $T^m/H$, we get a locally standard $T$-action on $M_{(P,\gl)}$ with orbit space $P$.  Moreover, the isotropy subgroup corresponding to the codimension one face $F_i$ is the circle $\gL_i$.

I am grateful to Mikiya Masuda for suggesting the proof of the following proposition in some e-mail correspondence.
\begin{proposition}\label{p:locally}
For $T=T^n$, suppose $M^{2n}$ has a locally standard, smooth $T$-action with orbit space $P$, where $P$ is an acyclic manifold with acyclic faces, with codimension one faces $F_1,\dots, F_m$.  Let $\gl:T^m\to T$ be its characteristic function and put $H=\Ker \gl$.  There is a smooth principal $H$-bundle $\pi: Y^{n+m} \to M^{2n}$ with $H_1(Y)=0$.  Moreover, there is a lift of the $T$-action to $Y$ giving a smooth $(T\times H)$-action on $Y$ and an isomorphism $L:T^m\to T\times H$ such that 
\begin{enumeratei}
\item
If $p_1:T\times H\to T$ denotes projection onto the first factor,  then $p_1\circ L=\gl:T^m\to T$ and  $L(H)=H$.
\item
After using $L^\minus$ to get a $T^m$-action on $Y$, the $T^m$-action is modeled on the standard representation $(\cc^m, T^m)$.
\item
$Y$ is $T^m$-equivariantly diffeomorphic to $Z_P$ and $M^{2n}$
is $T$-equivariantly diffeomorphic to $M_{(P,\gl)}$ defined by \eqref{e:reduc}.
\end{enumeratei}
\end{proposition}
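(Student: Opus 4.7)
The strategy is to construct the principal $H$-bundle $\pi : Y \to M^{2n}$ together with a lift of the $T$-action so that the resulting $(T \times H)$-action on $Y$, transported through an isomorphism $L : T^m \to T \times H$, is modeled on the standard representation of $T^m$ on $\cc^m$; Proposition~\ref{p:moment} then yields a $T^m$-equivariant diffeomorphism $Y \cong Z_P$, and quotienting by $H$ gives $M \cong M_{(P,\gl)}$ equivariantly. The isomorphism $L$ is obtained by splitting the short exact sequence of tori $1 \to H \to T^m \xrightarrow{\gl} T \to 1$, which splits because its cocharacter sequence $0 \to \zz^{m-n} \to \zz^m \to \zz^n \to 0$ splits ($\zz^n$ is free); any splitting $\gs : T \to T^m$ of $\gl$ gives $L(x) = (\gl(x),\, x \cdot \gs(\gl(x))^\minus)$, satisfying $p_1 \circ L = \gl$ and $L|_H = \id_H$.

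To construct $Y$, lift the classifying map $c : M_T \to BT$ of the Borel construction through the fibration $BH \to BT^m \xrightarrow{B\gl} BT$ to a map $\tilde c : M_T \to BT^m$. Since $BH \simeq K(H,2)$, the obstructions to the lift lie in $H^{\mathrm{odd}}(M_T; H)$. The Masuda--Panov isomorphism $H^*(M_T) \cong H^*(BT) \otimes H^*(M)$ combined with the vanishing of $H^{\mathrm{odd}}(M)$ forces $H^{\mathrm{odd}}(M_T) = 0$, so $\tilde c$ exists. I would choose $\tilde c$ so that its restriction to each codimension-two submanifold $V_i = \pi^{-1}(F_i)$ records precisely the characteristic circle $\gL_i$; the freedom to do so is parametrized by $H^2(M_T; H)$, which Masuda--Panov shows is freely generated by classes dual to the $V_i$. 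Pulling back the universal principal $T^m$-bundle via $\tilde c$, restricting to a fiber $M \hookrightarrow M_T$, and quotienting by $\gs(T) \subset T^m$ yields the principal $H$-bundle $\pi : Y \to M$ with commuting lifts of $T$ and $H$, i.e.\ a smooth $(T \times H)$-action.

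By the alignment of $\tilde c$ with $\gl$ at each $V_i$, the $T^m$-isotropy on $Y$ (via $L^\minus$) at a point of $\pi^{-1}(V_i)$ is the coordinate circle $T_i \subset T^m$; hence the $T^m$-action on $Y$ is modeled on the standard representation, and the principal orbit bundle is trivial because over the contractible interior $P^\circ$ all structure trivializes. The vanishing $H_1(Y) = 0$ follows from the long exact sequence of the fibration $H \to Y \to M$: $\pi_1(M) = 0$ because surjectivity of $\gl$ kills $\pi_1(T)/\langle \gL_i \rangle$, and the connecting map $\pi_2(M) \to \pi_1(H)$ is surjective because the classes dual to the $V_i$ generate. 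Proposition~\ref{p:moment} then delivers a $T^m$-equivariant diffeomorphism $Y \cong Z_P$, and quotienting by $H$ yields $M \cong M_{(P,\gl)}$ equivariantly. The main obstacle will be arranging that $\tilde c$ realizes precisely the prescribed isotropy circles $T_i$ at each $V_i$ --- equivalently, choosing the correct $H$-twist for $Y$; this is exactly where Masuda--Panov's fine computation of $H^2(M_T)$ is essential, pinning down $Y$ to the moment-angle manifold rather than some gauge cousin.
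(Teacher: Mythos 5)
Your argument inverts the logical order of the paper's proof in a way that opens a real gap. You fix $L$ up front as an arbitrary splitting of $1\to H\to T^m\xrightarrow{\gl}T\to 1$, and then need the bundle $Y$ to be chosen so that the isotropy circle $L_i\subset T\times H$ at a point of $Y$ over the face $F_i$ is exactly $L(T_i)$ --- a specific, pre-chosen target in $T\times H$, not merely one member of a spanning collection. You flag this yourself as ``the main obstacle'' and gesture at the computation of $H^2(M_T)$, but you never show that the available freedom in the lift $\tilde c$ (equivalently, in the $H$-twist of $Y$) is enough to hit that prescribed target, and it is not obvious that it is. The paper sidesteps this entirely by reversing the order: it first chooses $Y$ so that the characteristic class of $Y\to M$ in $\bigoplus H^2(M)$ is a basis of $H^2(M)\cong\zz^{m-n}$, then proves (Lemma~\ref{l:span}, via a spectral-sequence comparison of the face rings of $N(P)$ and $N(A)$) that this choice forces $H_1(Y)=0$, which is equivalent to the $L_i$ spanning $T\times H$; only \emph{then} is $L$ defined by $L(T_i)=L_i$, after which $p_1\circ L=\gl$ and $L(H)=H$ are immediate.

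Your $H_1(Y)=0$ argument suffers from the same inversion: the connecting map $\pi_2(M)\to\pi_1(H)$ in the fibration exact sequence is not automatically onto ``because the classes dual to the $V_i$ generate'' --- it depends on the characteristic class of $Y$, and being onto is exactly the condition the paper arranges by hand and then analyzes in Lemma~\ref{l:span}. Your obstruction-theoretic reformulation of the lifting step (single obstruction in $H^3(M_T;\zz^{m-n})$, which vanishes since $H^{\mathrm{odd}}(M_T)=0$) is a legitimate way to repackage the paper's use of the Hattori--Yoshida theorem together with the surjectivity in \eqref{e:se}, so that piece is correct. The genuine problem is the pre-commitment to a splitting $L$ and the unestablished matching of the $L_i$ to $L(T_i)$; defer the definition of $L$ until after $Y$ is pinned down, and the gap disappears.
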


We begin with some notation that will be used in the proof.  Principal $H$-bundles over $M$ are classified by homotopy classes of maps from $M$ to $BH$, i.e., by $[M,BH]$.  According to a  theorem of Hattori-Yoshida \cite{hy} (also cf.\ \cite{stewart}), given a principal $H$-bundle $Y\to M$, the $T$-action on $M$ lifts to a $(T\times H)$-action on $Y$ if and only if $Y$ is the pullback of a bundle over $M_T$.  Since $H\cong T^{n-m}$, $[M,BH]$ is the product of $(m-n)$ copies of $[M,BS^1]$, i.e., $[M,BH]\cong\bigoplus_{i=1}^{n-m} H^2(M)$.  By \eqref{e:se}, $H^2(M_T)\to H^2(M)$ is onto and hence, so is $[M_T,BH]\to [M,BH]$.  Therefore, any principal $H$-bundle over $M$ is the pullback of a principal $H$-bundle over $M_T$.  So, the Hattori-Yoshida Theorem  applies.  Let $L_i < (T\times H)$ be the isotropy subgroup at a point in the relative interior of $F_i$.  The group $T\times H$ is an $m$-dimensional torus.  To get a $T^m$-action on $Y$ which is modeled on the standard representation we need  the homomorphism $T^m\to T\times H$ which takes the coordinate circle $T_i$ to $L_i$ to be an isomorphism.  (Given a collection of $L_1,\dots, L_m$ of circle subgroups in $T\times H$, we say that the $L_i$ \emph{span} $T\times H$ if the homomorphism $T^m\to T\times H$ which takes $T_i$ to $L_i$ is an isomorphism.)

\begin{lemma}\label{l:span}
Suppose $M$ is as above and that $Y$ is a principal $H$-bundle over $M$.  Then the $L_i$ span $T\times H$ if and only if $H_1(Y)=0$.
\end{lemma}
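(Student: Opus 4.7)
\textbf{Plan for the proof of Lemma~\ref{l:span}.}
The plan is to compute $H_1(Y)$ directly from the Leray--Serre spectral sequence of the fibration $H\to Y\to M$, using the Masuda--Panov description of $H^*(M)$ recalled in the paragraphs preceding the lemma, and then interpret the resulting obstruction as the surjectivity of the homomorphism $L_*\colon \mathbb{Z}^m\to \pi_1(T\times H)=\mathbb{Z}^m$ sending $e_i$ to $\pi_1(L_i)$. Spanning of the $L_i$ in $T\times H$ is equivalent to $L_*$ being an isomorphism (hence, by rank, to its surjectivity), so the whole lemma reduces to a cokernel computation.

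\emph{Step 1 (input from Masuda--Panov).} Since $H^*(M)$ vanishes in odd degrees and $H^2(BT)\to H^2(M_T)\to H^2(M)\to 0$ is exact, $H_1(M)=0$ and $H_2(M)$ fits into a short exact sequence $0\to H_2(M)\to \mathbb{Z}^m\xrightarrow{\lambda_*}\mathbb{Z}^n\to 0$, where $\mathbb{Z}^m$ is the $\mathbb{Z}$-dual of $H^2(M_T)=\bigoplus \mathbb{Z} v_i$ (the face ring in degree two) and $\lambda_*$ sends $e_i\mapsto \lambda_i\in\pi_1(T)$.

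\emph{Step 2 (apply Leray--Serre).} For the principal $H$-bundle $Y\to M$, the homological Serre spectral sequence has $E^2_{p,q}=H_p(M;H_q(H))$. Because $H_1(M)=0$, the only nontrivial contribution to $H_1(Y)$ comes from $E^\infty_{0,1}$, and
\[
H_1(Y)=\operatorname{coker}\bigl(d^2\colon H_2(M)\to H_1(H)=\pi_1(H)=\mathbb{Z}^{m-n}\bigr).
\]

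\emph{Step 3 (identify the transgression).} Write $\pi_1(L_i)=(\lambda_i,\mu_i)\in \pi_1(T)\oplus\pi_1(H)$. The transgression is dual to the map $H^1(H)\to H^2(M)$ obtained from the classifying map $f\colon M\to BH$, and $f$ lifts to $f_T\colon M_T\to BH$ because $Y$ was constructed as a pullback from $M_T$. Under the face ring identification $H^2(M_T)=\bigoplus \mathbb{Z} v_i$, a character of $H$ dual to the $j$-th coordinate circle has first equivariant Chern class $\sum_i \mu_{ij}v_i$, since the $L_i$-isotropy at points of $\pi^{-1}(F_i)$ records precisely the winding of the bundle over the codimension-two submanifold $M_i:=\pi^{-1}(F_i)$. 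Passing to $H^2(M)$ and dualizing, the transgression sends $(a_i)\in \ker\lambda_*=H_2(M)$ to $\sum_i a_i\mu_i\in \mathbb{Z}^{m-n}$.

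\emph{Step 4 (conclude).} The $L_i$ span $T\times H$ if and only if $L_*\colon e_i\mapsto(\lambda_i,\mu_i)$ is onto $\mathbb{Z}^n\oplus\mathbb{Z}^{m-n}$. The vertex condition on the characteristic function $\lambda$ already yields $\operatorname{image}(\lambda_*)=\mathbb{Z}^n$, so surjectivity of $L_*$ reduces to the image of $\ker\lambda_*$ under $(a_i)\mapsto \sum a_i\mu_i$ being all of $\mathbb{Z}^{m-n}$, which by Steps~2--3 is exactly the statement $H_1(Y)=0$.

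The main obstacle I anticipate is Step~3: pinning down the transgression and verifying that the $H$-component $\mu_i$ of $\pi_1(L_i)$ is indeed the coefficient of $v_i$ in the first Chern class of the corresponding line-bundle factor of $Y_T\to M_T$. This demands a careful use of the Borel-construction lift together with Masuda--Panov's identification of $H^*(M_T)$ with the face ring; once this geometric/algebraic dictionary is in place, Step~4 is elementary linear algebra over $\mathbb{Z}$.
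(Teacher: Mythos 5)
Your Steps 1, 2, and 4 match the paper's algebra: the paper also reaches $H_1(Y)=\Coker\bigl(d_2:H_2(M)\to H_1(H)\bigr)$, and the same linear-algebra observation is implicit there, namely that spanning of the $L_i$ is surjectivity of $L_*:\mathbb{Z}^m\to\mathbb{Z}^n\oplus\mathbb{Z}^{m-n}$, $e_i\mapsto(\lambda_i,\mu_i)$, which by surjectivity of $\lambda_*$ reduces to the image of $\ker\lambda_*$ under $(a_i)\mapsto\sum_i a_i\mu_i$ being all of $\mathbb{Z}^{m-n}$. The genuine divergence is in identifying the transgression $d_2$. You propose to compute it directly by showing the $H$-component $\mu_i$ of $\pi_1(L_i)$ is the $v_i$-coefficient of the first Chern class of $Y$ pulled back from $M_T$; as you anticipate, that is the crux, and as written it is asserted rather than proved.

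The paper avoids this identification by a geometric reduction. It picks a point $p$ in the interior of $P$, segments $I_i$ joining $p$ to the relative interiors of the $F_i$, sets $A=\bigcup I_i$, $U=q^{-1}(A)$, $V=\pi^{-1}(U)$, and computes $\pi_1(V)\cong(\mathbb{Z}^n\oplus\mathbb{Z}^{m-n})/\sum\langle l_i\rangle$ directly, since $V$ is assembled from $D^2$-bundles over $(T\times H)/L_i$ glued along $T\times H$. The remaining work is the Claim that $V\hookrightarrow Y$ induces $H_1(V)\cong H_1(Y)$, proved by comparing Serre spectral sequences: $M_T$ and $U_T$ are Davis--Januszkiewicz spaces with isomorphic $H^2$ (because $N(P)$ and $N(A)$ have the same vertex set), which gives $H_2(U)\cong H_2(M)$ and then $H_2(V)\cong H_2(Y)$, and one then compares the $d_2$-differentials of the two $H$-bundle spectral sequences. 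The effect is to compute the transgression on the model $V$, where it is transparent through $\pi_1$, and transport it to $Y$ without ever writing it down for $Y$ itself.

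Your route would yield a legitimate and arguably more direct alternative proof, but Step 3 --- identifying $c_1(Y)$ with $\sum_i\mu_i\otimes v_i|_M$ --- requires a local argument near the $\pi^{-1}(F_i)$ (of the kind underlying the face-ring description in Davis--Januszkiewicz) that you have not supplied. Until that is written out, Step 3 is a gap, not a detail, and it is precisely the computation the paper's geometric detour is designed to bypass.
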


\begin{proof}
Let $q:M\to P$ be the orbit map and $\pi:Y\to M$ the bundle projection.
Choose a point $p$ in the interior of $P$. Let $I_i$ be a line segment joining $p$ to a point in the relative interior of $F_i$.  Put $A=\bigcup I_i$, $U=q^\minus (A)$ and $V=\pi^{-1}(U)$.  Then $q^\minus(p)=T$ and $q^\minus (I_i)$ is a $D^2$-bundle over $T/\gL_i$.  We have $\pi_1(T)\cong \zz^n$ and $\pi_1(H)\cong \zz^{m-n}$.  Hence, $\pi_1(q^\minus(I_i))= \zz^n/\langle \gl_i\rangle$, where $\langle \gl_i\rangle$ is the $\zz$-submodule determined by $\gL_i$, and $\pi_1(U)=\zz^n/\sum \langle \gl_i\rangle$.  Since the $\langle \gl_i\rangle$ span $\zz^n$, $\pi_1(U)$ is trivial.  Similarly, $\pi^\minus (q^\minus(p))=T\times H$ and $\pi^\minus(q^\minus (I_i))$ is a $D^2$-bundle over $(T\times H)/L_i$.  Hence, $\pi_1(\pi^\minus(q^\minus(I_i)))= (\zz^n \oplus \zz^{m-n})/\langle l_i\rangle$ , where $\langle l_i\rangle$ is the $\zz$-submodule determined by $L_i$.  So, $\pi_1(V)\cong (\zz^n\oplus \zz^{m-n})/\sum \langle l_i\rangle$.  

A spectral sequence argument can be used to prove the following.
\begin{claim}
The inclusion $V\hookrightarrow Y$ induces an isomorphism $H_1(V)\cong H_1(Y)$.
\end{claim}
\noindent
Assuming this claim, we have $H_1(Y)\cong H_1(V)\cong (\zz^n\oplus \zz^{m-n})/\sum \langle l_i\rangle$ and since the quotient is trivial if and only if the $L_i$ span $T\times H$, we get the lemma.   

To prove the claim, note that $M_T$ and $U_T$ are the Davis-Januszkiewicz spaces for the simple polyhedral complexes $P$ and $A$, respectively (cf.\ \cite[Section 4]{dj91}).  By \cite[Theorem 4.8]{dj91}, $H^*(M_T)$ and $H^*(U_T)$ are the face rings of the simplicial complexes $N(P)$ and $N(A)$ which are dual to $P$ and $A$.  Such face rings are generated by $H^2$, and $H^2$ is free abelian on the vertex set of the simplicial complex.  Since $N(P)$ and $N(A)$ have the same vertex set (namely, $m$ points), the inclusion induces an isomorphism $H^2(M_T)\mapright{\cong} H^2(U_T)\cong \zz^m$ and hence, also an isomorphism on homology,  $H_2(U_T)\mapright{\cong} H_2(M_T)$.  Since $P$ is acyclic, it follows from \cite{mp} that $H_1(M)=0=H_1(U)$.  Comparing the Serre spectral sequences in homology for $U_T\to BT$ and $M_T\to BT$, we see that $H_2(U)\mapright{\cong} H_2(M)$.  Then comparing the spectral sequences for the principal $H$-bundles, $V\to U$ and $Y\to M$, we see that $H_2(V)\mapright {\cong} H_2(Y)$.  
Finally, if $d_2: E^2_{2,0}\to E^2_{0,1}$ denotes the $E^2$-differential, 
\begin{align}
H_1(V)&=\Coker (d_2:H_2(U)\to H_1 (H)\cong \zz^{m-n})\notag\\
H_1(Y)&=\Coker (d_2:H_2(M)\to H_1 (H)\cong \zz^{m-n}).\label{e:d2}
\end{align}
So, $H_1(V)\cong H_1(Y)$ establishing the claim and consequently, the lemma.
\end{proof}

\begin{proof}[Proof of Proposition~\ref{p:locally}]
The  characteristic class $c(Y)=(c_1, \dots ,c_{m-n})$ of a principal $H$-bundle $Y\to M$ lies in $\bigoplus_{i=1}^{m-n} H^2(M)$, where $H^2(M)\cong \zz^{m-n}$.  According to Lemma~\ref{l:span}, we want $H_1(Y)=0$.  To achieve this, choose $c_1, \dots, c_{m-n}$ to be a basis for $H^2(M)$.  We will then have that  $d_2:H_2(M)\to H_1(H)$ is onto and by \eqref{e:d2}, that $H_1(Y)=0$.  By Lemma~\ref{l:span}, the $L_i$ span $T\times H$.  Let $L:T^m\to T\times H$ be an isomorphism which sends $T_i$ to $L_i$.  From the definitions of $L_i$ and $\gL_i$, we see that $p_1:T\times H\to T$ takes $L_i$ to $\gL_i$; so, $p_1\circ L=\gl:T_m\to T$. Hence, $L(H)=L(\Ker \gl)=\Ker p_1 = H$.   So, Properties (i) and (ii) hold.  By Proposition~\ref{p:moment}, $Y$ is $T^m$-equivariantly diffeomorphic to $Z_P$.  Hence, $M=Y/H$ is $T$-equivariantly diffeomorphic to $M_{(P,\gl)}$, establishing Property (iii).
\end{proof}

By definition, the \emph{equivariant cohomology} of $M$ is the cohomology of $M_T$.  It is an algebra over $H^*(BT)$.  Masuda \cite{masuda} shows  that if  quasitoric manifolds have isomorphic equivariant cohomology as algebras over $H^*(BT)$, then they are equivariantly homeomorphic (\cite[Theorem 4]{masuda}).  Using Proposition~\ref{p:locally} and Corollary~\ref{cor:diffIII} we can upgrade Masuda's rigidity result from equivariant homeomorphism to equivariant diffeomorphism, as inTheorem~\ref{t:masuda} below.  (A different proof of this, using the theory of ``normal systems,'' is given by Wiemeler \cite{wiemeler}.)

\begin{theorem}\label{t:masuda}\textup{(cf.~Masuda \cite{masuda}, Wiemeler \cite[Corollary 5.7]{wiemeler}).}
Quasitoric manifolds have isomorphic equivariant cohomology as algebras over $H^*(BT)$ if and only if they are equivariantly diffeomorphic.
\end{theorem}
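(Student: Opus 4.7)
The plan is to prove the theorem by combining three inputs: (a) Masuda's original rigidity result, which gives an equivariant homeomorphism from the assumption on equivariant cohomology; (b) Corollary~\ref{cor:diffIII}, to promote the resulting combinatorial equivalence of orbit polytopes to a diffeomorphism of manifolds with corners; and (c) Proposition~\ref{p:locally}, to promote the equivariant homeomorphism to an equivariant diffeomorphism. The ``only if'' direction is immediate since an equivariant diffeomorphism induces a $H^*(BT)$-algebra isomorphism on equivariant cohomology, so I focus on the ``if'' direction.

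So suppose $M$ and $M'$ are quasitoric manifolds with orbit polytopes $P$ and $P'$ and characteristic functions $\gl, \gl'$, and suppose $H^*_T(M) \cong H^*_T(M')$ as algebras over $H^*(BT)$. By \cite[Theorem 4]{masuda} there exists a $T$-equivariant homeomorphism $\Phi\colon M \to M'$. This $\Phi$ descends to a homeomorphism $\gf\colon P \to P'$ which is automatically strata-preserving (since the codimension-one faces of $P$ are precisely the images under the orbit map of the circle isotropy strata, and $T$-equivariance preserves isotropy). In particular, $\gf$ induces a combinatorial equivalence of the simple convex polytopes $P$ and $P'$ in the sense of the Introduction. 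Moreover, because $\Phi$ is $T$-equivariant, the isotropy circle $\gL_i$ over the interior of $F_i$ coincides with the isotropy circle $\gL'_{\gs(i)}$ over the interior of $\gf(F_i)=F'_{\gs(i)}$. Thus the characteristic function data match under $\gf$: $(P,\gl)$ and $(P',\gl')$ are isomorphic as labeled polytopes.

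Next I apply Corollary~\ref{cor:diffIII} to produce a diffeomorphism of manifolds with corners $\wt{\gf}\colon P \to P'$ realizing the same combinatorial equivalence as $\gf$ (i.e.\ $\wt{\gf}(F_i) = F'_{\gs(i)}$). Since the characteristic circles attached to corresponding codimension-one faces agree, $\wt{\gf}$ is an isomorphism of the pairs $(P,\gl)$ and $(P',\gl')$. By Proposition~\ref{p:locally}, $M$ is $T$-equivariantly diffeomorphic to $M_{(P,\gl)}$ and $M'$ is $T$-equivariantly diffeomorphic to $M_{(P',\gl')}$. Because $\wt{\gf}$ identifies $(P,\gl)$ with $(P',\gl')$, the construction \eqref{e:reduc} (pulling back $\cc^m$ along a stratified map to $\rr_+^m$, then quotienting by $H=\Ker\gl$) produces equivariantly diffeomorphic manifolds; explicitly, $\wt{\gf}$ lifts to an equivariant diffeomorphism $Z_P \to Z_{P'}$ intertwining the $T^m$-actions, and this descends to $M_{(P,\gl)} \cong_T M_{(P',\gl')}$. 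Composing the three equivariant diffeomorphisms yields the desired equivariant diffeomorphism $M \to M'$.

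The main obstacle I anticipate is verifying that the diffeomorphism $\wt{\gf}$ produced by Corollary~\ref{cor:diffIII} can be chosen to respect the characteristic function, not merely the face poset. Corollary~\ref{cor:diffIII} gives some strata-preserving diffeomorphism realizing the combinatorial equivalence, but the characteristic data is additional decoration on codimension-one faces; fortunately, this decoration is constant on each face, so any face-preserving diffeomorphism automatically respects it once the labeling bijection $\gs$ is fixed. A secondary (and standard) point to check is that two stratified maps $f, f'\colon P \to \rr_+^m$ with the same combinatorics are homotopic through stratified maps, so that Remark~\ref{r:pullback}(b) and Proposition~\ref{p:moment} yield the promised equivariant diffeomorphism $Z_P \to Z_{P'}$ after transporting via $\wt{\gf}$.
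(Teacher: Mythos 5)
Your proof is correct and follows the same route the paper outlines in the sentence preceding the theorem (Masuda's Theorem~4 to get an equivariant homeomorphism, then Corollary~\ref{cor:diffIII} plus Proposition~\ref{p:locally} to upgrade it); the paper does not supply a more detailed argument than that, and your elaboration of the intermediate steps (the strata-preserving descent to orbit spaces, matching of characteristic functions, and passage through $Z_P$) fills them in accurately.
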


On the other hand, from Theorem~\ref{t:notdiff} we get the following.

\begin{theorem}\label{t:qt}
For each $n\ge 4$, there are locally standard $T^n$-manifolds over contractible manifolds with contractible faces (i.e., Coxeter orbifolds of type (I)) which are equivariantly homeomorphic but not equivariantly diffeomorphic. 
\end{theorem}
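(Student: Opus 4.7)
The plan is to realize the two non-diffeomorphic smooth resolutions of Theorem~\ref{t:notdiff2} as the orbit spaces of a pair of locally standard torus actions. I apply Theorem~\ref{t:notdiff2} to get a $\GHS^{n-1}$ $N$ and two smooth resolutions $P,P'$ of $\cone(N)$ which are not diffeomorphic as smooth manifolds with corners. The topological uniqueness of resolutions (Section~\ref{prelim}, a consequence of Lemma~\ref{l:topcat}) simultaneously provides a strata-preserving homeomorphism $P\to P'$, so $P$ and $P'$ are topologically isomorphic but not diffeomorphic as manifolds with corners.

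To apply the moment-angle construction I need $N$ to carry a characteristic function $\gl\colon\zz^m\to\zz^n$ with $m=|V(N)|$. I would take $N:=N_0*\gS$, where $N_0$ is the barycentric subdivision of any triangulation of $\bo C$ and $\gS$ is the boundary of an $(n-4)$-dimensional cross-polytope (drop the join factor when $n=4$). Coloring each vertex of $N_0$ by the dimension of the underlying simplex of $\bo C$, together with a standard $(n-4)$-coloring of $\gS$, gives a proper $n$-coloring $c$ of $N$, and $\gl(v):=e_{c(v)}$ is then a characteristic function because the vertices of any top simplex receive all $n$ distinct standard basis vectors. A top simplex $\gt$ of $\gS$ is a codimension-$4$ simplex of $N$ whose link equals $N_0\cong\bo C$, so Theorem~\ref{t:notdiff2} furnishes $P,P'$ differing by the choice of $C$ versus $C'$ as the $4$-face dual to $\gt$.

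Define $M:=M_{(P,\gl)}=Z_P/\Ker\gl$ and $M':=M_{(P',\gl)}=Z_{P'}/\Ker\gl$ via \eqref{e:reduc}; by Proposition~\ref{p:locally} these are smooth locally standard $T^n$-manifolds over the Coxeter orbifolds of type~(I) given by $P$ and $P'$. The topological moment-angle construction $P\mapsto(P\times T^m)/{\sim}$ is functorial in topological isomorphisms of manifolds with corners, so the homeomorphism $P\cong P'$ yields a $T^m$-equivariant homeomorphism $Z_P\cong Z_{P'}$, which descends to a $T^n$-equivariant homeomorphism $M\cong M'$. Conversely, any equivariant diffeomorphism $M\to M'$ would descend through the smooth orbit map to a diffeomorphism of smooth manifolds with corners $P\to P'$, contradicting Theorem~\ref{t:notdiff2}.

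The main obstacle is arranging for $N$ to simultaneously admit a characteristic function and support the Akbulut-type $4$-dimensional obstruction; the join construction above does both, the subtle point being that one must confirm that barycentrically subdividing $\bo C$ preserves it as a codimension-$4$ link in $N$, which is automatic because the top simplices of the $\gS$ factor have link equal to $N_0$ in the join.
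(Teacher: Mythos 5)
Your argument is correct and fills in, with useful precision, a proof that the paper itself only gestures at (the paper derives Theorem~\ref{t:qt} from Theorem~\ref{t:notdiff} in a single sentence with no construction given). The essential idea matches the paper's: take the non-diffeomorphic resolutions $P,P'$ of a suitable $\cone(N)$, use the moment-angle/reduction construction of Section~\ref{s:torus} to manufacture locally standard $T^n$-manifolds over them, use the topological uniqueness of resolutions for the equivariant homeomorphism, and use the smoothness of the orbit map to rule out equivariant diffeomorphism. The detail you add that the paper omits entirely --- namely that one must choose $N$ so that it actually supports a characteristic function, and your explicit proper $n$-coloring of $N_0*\gS$ via the dimension function on the barycentric subdivision together with the antipodal coloring of the cross-polytope boundary --- is genuinely necessary content, since not every $\GHS^{n-1}$ admits a characteristic function and the reduction \eqref{e:reduc} cannot be carried out without one.

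Two minor points worth tightening. First, you cite Theorem~\ref{t:notdiff2} to furnish $P,P'$ for your particular $N$, but that theorem is merely existential; what you really need (and what you in fact use) is the construction from the paper's sketch proof of Theorem~\ref{t:notdiff2}, applied to your $N$: fill the $4$-face dual to a top simplex of $\gS$ (whose link is $N_0\cong\bo C$) with $C$ versus $C'$ and fill all remaining $4$-faces identically. You should note that the remaining $4$-faces, dual to codimension-$4$ simplices involving a vertex of $N_0$, have PL $3$-sphere links (since $N_0$ is a PL manifold) and hence are filled with $D^4$ in both $P$ and $P'$, so no spurious ambiguity arises there. Second, the claim that an equivariant diffeomorphism $M\to M'$ descends to a diffeomorphism of manifolds with corners $P\to P'$ is correct for locally standard torus actions (the orbit map is locally the standard $p:\cc^n\to\rr_+^n$, which endows the quotient with a canonical smooth-with-corners structure), but this is exactly the fact one is leaning on and deserves an explicit citation or justification rather than being folded into one word (``descend''). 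Neither of these is a gap in the approach; both are places where the writeup could be made airtight.
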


\paragraph{Toric varieties.} A nonsingular, compact toric variety is determined by a nonsingular, complete simplicial fan in $\rr^n$.  The intersection of such a fan with the unit sphere $S^{n-1}$  is a totally geodesic triangulation $N$ of $S^{n-1}$.   Such a geodesic triangulation of $S^{n-1}$ need not be simplicially isomorphic to the boundary complex of a simplicial polytope, e.g., see \cite[p.\,194]{ziegler}.  (We were not cognizant of this fact when we wrote \cite{dj91}.)  The rays of the fan are rational and determine a characteristic function $\gl$.  So, the fan determines a locally standard $T$-action on $M^{2n}$.  The orbit space $P^n= M^{2n}/T$ has the structure of a smooth manifold with faces.  Since $N$ is a PL triangulation of $S^{n-1}$, $P$ is a Coxeter orbifold of type (II).  When the toric variety is projective, one can use the moment map to identify $P^n$ with a simple polytope. In particular, the faces of $P$ are diffeomorphic to  disks.  As we will see below, this holds in general (it is not automatic since the smooth $4$-dimensional Poincar\'e Conjecture is not known.)

\begin{proposition}\label{p:toric}
Suppose $M^{2n}$ is nonsingular, compact toric variety and $P=M^{2n}/T$.  Then each face of $P$ is diffeomorphic to a disk.
\end{proposition}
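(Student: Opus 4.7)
The plan is to show each $k$-dimensional face of $P$ is diffeomorphic to $D^k$, handling the dimensions $k\neq 4$ by standard smoothing theory and the decisive dimension $k=4$ by reducing to the projective case via equivariant toric weak factorization. Each $k$-face of $P$ is the orbit space $V(\sigma)/T^k$ of the smooth compact toric $k$-fold $V(\sigma)$ attached to the codimension-$k$ cone $\sigma$ of the fan of $M$, and because $P$ is a Coxeter orbifold of type~(II), each face is already homeomorphic to $D^k$.

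For $k\le 3$, $D^k$ carries a unique smooth structure (elementary for $k\le 2$; Moise's theorem together with the $3$-dimensional Poincar\'e conjecture for $k=3$), so the face is automatically diffeomorphic to $D^k$. For $k\ge 5$, I would remove a small open disk from the interior of the face to obtain a simply connected $h$-cobordism from $S^{k-1}$ to $S^{k-1}$; by the smooth $h$-cobordism theorem (supplemented by smooth Schoenflies in dimension $5$) this cobordism is a product, whence the face is diffeomorphic to $D^k$.

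The decisive case is $k=4$. If $V(\sigma)$ happens to be projective, the moment map identifies $V(\sigma)/T^4$ with a smooth simple convex polytope, which is diffeomorphic to $D^4$. For general $V(\sigma)$, apply the equivariant weak factorization theorem for smooth complete toric varieties (Morelli, W\l odarczyk): the fan of $V(\sigma)$ is connected to the fan of $\cc P^4$ by a finite zig-zag of $T$-equivariant smooth stellar subdivisions and their inverses -- each being a blow-up or blow-down along a smooth $T$-invariant centre -- that stays within the category of smooth compact toric $4$-folds. On orbit spaces a single such step is a purely local corner modification: an $\rr^4_{\ge 0}$ chart near a face is replaced by the same chart with a corner region truncated, and the image of the exceptional divisor is inserted as a new codimension-one face. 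Since this operation has an affine-linear local model and the truncated region is itself a smooth $4$-disk, it preserves the diffeomorphism type of the underlying smooth $4$-manifold obtained by rounding corners. Propagating from $\cc P^4/T^4=\Delta^4\cong D^4$ through the chain then forces $V(\sigma)/T^4\cong D^4$ smoothly.

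The principal technical obstacle is this last step: verifying rigorously that each elementary corner-truncation preserves the smooth diffeomorphism type of the underlying $4$-manifold, and that the chain of local modifications composes into a diffeomorphism onto the standard $D^4$. This is delicate because the smooth $4$-dimensional Poincar\'e conjecture is unavailable, but the essential point is that each toric blow-up is a purely local operation with a linear normal model, so the global obstructions in $\gT_4$ that would otherwise arise through Lemma~\ref{l:cmfld}(2) never come into play.
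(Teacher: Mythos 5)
Your approach is genuinely different from the paper's and invokes much heavier machinery in the decisive dimension $k=4$. The paper's proof is a short geometric observation: each top-dimensional cone of the fan yields a local model $(D^2)^n$ for the $T$-action with orbit space the cube $[0,1]^n$, and $P$ is assembled from these cubes via linear face identifications. This endows $P$ with a PL structure compatible with its smooth structure, in which every face is a PL disk (a dual cell in the PL triangulation $N$ of $S^{n-1}$). The $4$-faces, being smooth $4$-manifolds carrying a compatible PL $D^4$ structure, are therefore diffeomorphic to $D^4$ since $\mathrm{PL}=\mathrm{DIFF}$ in dimension $4$, and Theorem~\ref{t:diffI} then handles the higher-dimensional faces. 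No toric birational geometry is needed.

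Your route through equivariant weak factorization (W{\l}odarczyk's theorem for toric varieties) is a correct statement, but the key reduction from it is not established, and you correctly flag it as the principal obstacle. The missing content is exactly this: why does a single toric blow-up or blow-down, realized on orbit spaces as a face truncation or its inverse, preserve the smooth diffeomorphism type of the $4$-manifold $P$ after rounding corners? The observation that the region removed is a collar $F\times[0,\epsilon)^{4-d}$ of a face $F$ of dimension $d\le 3$, which is already a standard disk, makes the claim plausible, but ``affine-linear local model'' is not a proof in dimension $4$, where the usual smoothing-theory shortcuts are unavailable; as written there is a genuine gap precisely where you identify one. Separately, your $k\ge 5$ step requires the $k=4$ case to be settled first, since the $h$-cobordism argument needs $\partial F$ to be the \emph{standard} $S^{k-1}$; and at $k=5$ you should invoke $\Theta_5=0$ rather than the smooth $h$-cobordism theorem for $5$-dimensional cobordisms between $4$-manifolds, which is false in general.
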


\begin{proof}
Each top-dimensional simplex of $N$ corresponds to a $T$-action on $(D^2)^n$ with orbit space the $n$-cube, $[0,1]^n$.  The manifold with faces $P$ is obtained by gluing together these $n$-cubes via linear isomorphisms of faces. This defines a PL structure on $P$ so that  $\partial P$ is $N$ with its dual PL cell structure.  Therefore, each face of $P$ (and in particular, each $4$-dimensional face) is PL homeomorphic to a disk.  Theorem~\ref{t:diffI} completes the proof.
\end{proof}

\obeylines
Department of Mathematics 
The Ohio State University 
231 W. 18th Ave. 
Columbus, Ohio 43210 
{\tt davis.12@math.osu.edu}

\end{document}